\title{Doubly biased Maker-Breaker Connectivity game}
\author{Dan Hefetz \thanks{School of Mathematical Sciences, Queen Mary University of London,
Mile End Road, London E1 4NS, England. Email: d.hefetz@qmul.ac.uk.}
\and Mirjana Raki\'c \thanks{Department of Mathematics and
Informatics, University of Novi Sad, Serbia. Partly supported by
Ministry of Science and Environmental Protection, Republic of
Serbia. Email: mirjana.rakic@dmi.uns.ac.rs. This paper is a part of
the author's Ph.D. thesis under the supervision of Prof. Milo\v{s}
Stojakovi\'c.} \and Milo\v{s} Stojakovi\'c\thanks{Department of
Mathematics and Informatics, University of Novi Sad, Serbia. Partly
supported by Ministry of Science and Environmental Protection,
Republic of Serbia, and Provincial Secretariat for Science, Province
of Vojvodina. Email: milos.stojakovic@dmi.uns.ac.rs.}}
\newif\ifnotesw\noteswtrue% T to show comments; F supresses.
\def\({\left(}
\def\){\right)}
\newtheorem{theorem}{Theorem}
\newtheorem{lemma}[theorem]{Lemma}
\newtheorem{corollary}[theorem]{Corollary}
\renewcommand{\epsilon}{\varepsilon}
\newenvironment{proof}{\noindent{\bf Proof\,}}{\hfill$\Box$}
\begin{document}
\maketitle

\newtheorem{smalLem}{Lemma}[theorem]
%\setlength{\marginparwidth}{0 cm} \setlength{\evensidemargin}{0 cm}
%\setlength{\oddsidemargin}{0 cm} \setlength{\textwidth}{16 cm}

%\date{}
%\begin{document}
%\maketitle

\begin{abstract}
In this paper we study the $(a:b)$ Maker-Breaker Connectivity game, played on the edge-set of
the complete graph on $n$ vertices. We determine the winner for almost all values of $a$ and
$b$.
\end{abstract}

\section{Introduction}
Let $X$ be a finite set and let ${\mathcal F} \subseteq 2^X$ be a family of subsets of $X$. In
the biased \emph{$(a:b)$ Maker-Breaker game $(X, {\mathcal F})$} two players, called Maker and
Breaker, take turns in claiming previously unclaimed elements of $X$, with Breaker going
first. In each turn, Breaker claims $b$ elements of $X$, and then Maker claims $a$ elements.
The game ends as soon as every element of $X$ is claimed by either player. Maker wins the game
$(X, {\mathcal F})$ if, by the end of the game, he is able to fully claim some $F \in
{\mathcal F}$; otherwise Breaker wins the game. If Maker has a strategy to win against any
strategy of Breaker, then we say the game is \emph{Maker's win}; otherwise we say it is
\emph{Breaker's win}. The set $X$ will be referred to as the \emph{board} of the game, and the
elements of ${\mathcal F}$ will be referred to as the \emph{winning sets}. The natural numbers
$a$ and $b$ are called the $\emph{bias}$ of Maker and Breaker, respectively.

In this paper, our attention is dedicated to the $(a:b)$
Maker-Breaker \emph{Connectivity game on $E(K_n)$}, that is, the
board of this game is the edge-set of $K_n$, the complete graph on
$n$ vertices, and the winning sets are all spanning connected
subgraphs of $K_n$. From now on we will denote this game by
${\mathcal T}_n$.

%The most natural, and thus also most thoroughly studied, version of an $(X, {\mathcal F})$
%game is the $(1:1)$ game, also known as the \emph{fair} or \emph{unbiased} game.
It is easy to see that the $(1:1)$ game ${\mathcal T}_n$ is Maker's win. In fact, the outcome
of the $(1:1)$ Connectivity game is known even when the board is the edge-set of an arbitrary
graph $G$ -- it was proved in~\cite{Lehman} that this game is Maker's win if and only if $G$
admits two edge disjoint spanning trees. This led Chv\'atal and Erd\H{o}s~\cite{ChEr} to
introduce \emph{biased} games, that is, games for which $(a,b) \neq (1,1)$. Since the $(1:1)$
game ${\mathcal T}_n$ is an easy Maker's win, to give Breaker more power they studied the
biased $(1:b)$ games for $b > 1$.

Chv\'atal and Erd\H{o}s~\cite{ChEr} have observed that Maker-Breaker games are bias monotone,
that is, if some Maker-Breaker $(1:b)$ game $(X, {\mathcal F})$ is Breaker's win, then the
$(1:b+1)$ game $(X, {\mathcal F})$ is Breaker's win as well. Since, unless $\emptyset \in
{\mathcal F}$, the $(1:|X|)$ game $(X, {\mathcal F})$ is clearly Breaker's win, it follows
that, unless $\emptyset \in {\mathcal F}$ or ${\mathcal F} = \emptyset$ (we refer to these
cases as \emph{degenerate}), there exists a unique positive integer $b_0$ such that the
$(1:b)$ game $(X, {\mathcal F})$ is Maker's win if and only if $b \leq b_0$. This value of
$b_0$ is known as the \emph{threshold bias} of the game $(X, {\mathcal F})$. Chv\'atal and
Erd\H{o}s~\cite{ChEr} proved that the threshold bias of ${\mathcal T}_n$ is between $(1/4 -
\varepsilon)n/\ln n$ and $(1 + \varepsilon)n/\ln n$. They have conjectured that the upper
bound is in fact asymptotically best possible. This was verified by Gebauer and
Szab\'o~\cite{GeSa}.

Assume that the $(1:b)$ game ${\mathcal T}_n$ is being played, but instead of playing
optimally (as is always assumed in Game Theory), both players play randomly (they will thus be
referred to as \emph{RandomMaker} and \emph{RandomBreaker}, and the resulting game will be
referred to as the \emph{Random Connectivity game}). It follows that the graph built by
RandomMaker by the end of the game is a random graph $G(n, \lfloor \binom{n}{2}/(b+1) \rfloor)$.
It is well known that almost surely such a graph is connected if $\lfloor \binom{n}{2}/(b+1)
\rfloor \geq (1/2 + \varepsilon) n \ln n$ and disconnected if $\lfloor \binom{n}{2}/(b+1) \rfloor \leq (1/2 - \varepsilon) n \ln n$. Hence, almost surely RandomBreaker wins the game if $b \geq (1 + \tilde{\varepsilon})n/\ln
n$ but loses if $b \leq (1 - \tilde{\varepsilon})n/\ln n$, just like when both players play
optimally. This remarkable relation between positional games and random graphs, first observed
in~\cite{ChEr}, has come to be known as the \emph{probabilistic intuition} or \emph{Erd\H{o}s
paradigm}. Much of the research in the theory of positional games was since then devoted to
finding the threshold bias of certain games and investigating the probabilistic intuition.
Many of these results can be found in~\cite{BeckBook}.

While $(a:b)$ games, where $a > 1$, were studied less than the case $a = 1$, they are not without merit.
Indeed, the small change of going from $a=1$ to $a=2$ has a considerable impact on the outcome and the
course of play of certain positional games (see~\cite{BeckBook}). Moreover, it was shown in~\cite{BRP} that
the \emph{acceleration} of the so-called \emph{diameter-$2$} game partly restores the probabilistic intuition.
Namely, it was observed that, while $G(n,1/2)$ has diameter $2$ almost surely, the $(1:1)$
diameter-$2$ game (that is, the board is $E(K_n)$ and the winning
sets are all spanning subgraphs of $K_n$ with diameter at most $2$)
is Breaker's win. On the other hand, it was proved in~\cite{BRP}
that the seemingly very similar $(2:2)$ game is Maker's win. Further
examples of $(a:b)$ games, where $a > 1$, can be found
in~\cite{BeckBook, BRP, Gebauer}.

Similarly to the $(1:b)$ game, one can define the \emph{generalized threshold bias} for the
$(a:b)$ game as well. Given a non-degenerate Maker-Breaker game $(X, {\mathcal F})$ and $a\geq
1$, let $b_0(a)$ be the unique positive integer such that the $(a:b)$ game $(X, {\mathcal F})$
is Maker's win if and only if $b \leq b_0(a)$. In this paper we wish to estimate $b_0(a)$ for
the Connectivity game ${\mathcal T}_n$ and for every $a$.

Coming back to the Random Connectivity game, its outcome depends on the number of edges
RandomMaker has at the end of the game rather than on the values of $a$ and $b$. Hence, if
$a=a(n)$ and $b=b(n)$ are positive integers satisfying $b \leq a (1 - \varepsilon) n /\ln n$,
for some constant $\varepsilon > 0$, and $b$ is not too large (clearly if for example $b \geq
\binom{n}{2}$, then RandomBreaker wins regardless of the value of $a$), then almost surely
RandomMaker wins the game. Similarly, if $a$ is not too large and $b \geq a (1 + \varepsilon)
n /\ln n$, then almost surely RandomBreaker wins the game. Clearly the outcome of the random
game and of the regular game could vary greatly for large values of $a$ and $b$. For example,
while Breaker wins the $(\binom{n}{2}: n)$ game ${\mathcal T}_n$ in one move, the
corresponding random game is almost surely RandomMaker's win. We prove that for all
``reasonable'' values of $a$ and $b$, the probabilistic intuition is maintained. In the
following we state our results.

\begin{theorem} \label{th::bwin}
Let $\varepsilon > 0$ be a real number and let $n = n(\varepsilon)$
be a sufficiently large positive integer. If $a \leq \ln n$ and $b
\geq (1 + \varepsilon) \frac{an}{\ln(an)}$, then the $(a:b)$ game
${\mathcal T}_n$ is Breaker's win.
\end{theorem}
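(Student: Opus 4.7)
My plan is to have Breaker win by isolating some vertex in Maker's graph, i.e., by claiming every edge incident to some target vertex $v^*$; this produces a disconnected spanning Maker-graph and hence a win for Breaker in ${\mathcal T}_n$. Equivalently, Breaker plays an auxiliary box-type game in which the $n$ boxes are the stars $B_v = \{uv : u \neq v\}$ of $K_n$, each of size $n-1$, and Breaker's objective is to claim every edge of some $B_v$ before Maker places any edge into it. A complication is that these boxes overlap pairwise, so a single Maker move can kill (make unwinnable) up to two boxes at once.

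Following the approach Gebauer and Szab\'o~\cite{GeSa} used for $a=1$, I would have Breaker play greedily. Call a vertex $v$ \emph{alive} at time $t$ if Maker has not yet claimed any edge at $v$, i.e.\ $d_M^{(t)}(v) = 0$, and write $d_B^{(t)}(v)$ for Breaker's degree at $v$. In each round Breaker selects an alive vertex $v^*$ whose incident star still contains enough free edges and claims $b$ of them (if no alive vertex has $b$ free edges then Breaker has already isolated some vertex and wins). To analyze the strategy I would track a potential of the form
\[
\Phi_t \;=\; \sum_{v\ \text{alive at time}\ t} 2^{\,d_B^{(t)}(v)/b},
\]
so that $\Phi_t$ exceeding $n$ forces some alive vertex to reach Breaker-degree $n-1$ and Breaker wins. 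Breaker's $b$ concentrated moves at $v^*$ double that star's summand, while Maker's $a$ moves kill at most $2a$ alive vertices per round and otherwise leave $\Phi_t$ essentially unchanged.

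The delicate part is tuning everything so the threshold comes out as $(1+\varepsilon)an/\ln(an)$ and not just a constant multiple of it. With $a>1$ the adversarial Maker spreads her $a$ moves across up to $2a$ alive vertices per round, amplifying her effect on $\Phi_t$; the $\ln(an)$ in the denominator should emerge by balancing the approximately $\binom{n}{2}/(a+b)$ rounds of play against a per-round multiplicative gain of the form $1+\Theta(a/b)$ in $\Phi_t$ and taking logarithms. Pinning down the leading constant $(1+\varepsilon)$ requires a carefully weighted potential (in the spirit of Gebauer--Szab\'o's refinement of the Chv\'atal--Erd\H{o}s strategy) and an argument that Maker's optimal play cannot inflict more than the expected damage on $\Phi_t$ per round; I expect this constant-tight bookkeeping, together with controlling the overlap of the stars, to be the main technical obstacle. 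The hypothesis $a \leq \ln n$ should enter when bounding the number of vertices that Maker can simultaneously threaten, ensuring that the analysis of $\Phi_t$ carries over to the doubly biased regime.
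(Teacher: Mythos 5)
Your high-level target — have Breaker isolate a vertex — is the right one, and it is the paper's goal too. But the route you sketch is not the paper's, and it has a concrete gap that makes the threshold come out wrong.

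The paper follows Chv\'atal--Erd\H{o}s, not Gebauer--Szab\'o (who proved the \emph{Maker} side for $a=1$). Breaker's strategy has two phases. In the first phase Breaker builds a clique $C$ on $k = \lceil an/((a+1)\ln(an)) \rceil$ vertices, all of them ``alive'' in your terminology, by adding $a+\ell_i$ new alive vertices per round (with $\ell_i$ chosen so that the $\binom{a+\ell_i}{2}+(a+\ell_i)|C_i|$ new edges fit into Breaker's bias $b$); the hypothesis $a\le\ln n$ is used precisely to check this phase doesn't run out of alive vertices. The point of the clique is that all edges internal to $C$ are already Breaker's, so any Maker edge has at most one endpoint in $C$: Maker's $a$ edges threaten at most $a$ of the $k$ candidate vertices per round, not $2a$. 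In the second phase Breaker plays an auxiliary box game $B(k,k(n-k),b,a)$ on the $k$ disjoint stars leaving $C$, and the threshold $(1+\varepsilon)an/\ln(an)$ drops out of the box-game lemmas (Lemmas~\ref{lem1} and~\ref{lem2}).

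Your sketch treats all $n$ stars as boxes, acknowledging that a Maker edge can kill two at once, but it never removes that factor of $2$. Without the clique, you are effectively in the box game $B(n,n(n-1),b,2a)$, which is exactly what the paper does in Theorem~\ref{th:largea} for \emph{large} $a$ and which gives a strictly weaker bound in the regime $a\le\ln n$ of Theorem~\ref{th::bwin}. So ``constant-tight bookkeeping'' on your potential $\Phi_t$ cannot alone recover the stated threshold: the factor of two must be killed structurally, and the clique is how the paper does that. A second, more elementary problem: the concrete strategy you propose (Breaker concentrates all $b$ moves on a single alive $v^*$, doubling its summand) is defeated immediately by Maker placing one edge at $v^*$, which erases Breaker's entire investment that round; any winning strategy must distribute Breaker's effort over many boxes, which is what the canonical-hypergraph box game formalism achieves and which your potential $\Phi_t=\sum_v 2^{d_B(v)/b}$ as written does not.
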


Note that as $a$ approaches $\ln n$, the lower bound on $b$ in Theorem~\ref{th::bwin} exceeds
$n$ and is therefore trivial. Our next theorem improves that bound for large values of $a$.

\begin{theorem} \label{th:largea}
Let $\varepsilon > 0$ be a real number. If $(1 + \varepsilon) \ln n \leq a < \frac{n}{2e}$ and
$$
b\geq \frac{2a\left(n - 2 + \ln \left\lceil \frac{n}{2a}
\right\rceil\right) + \ln \left\lceil \frac{n}{2a} \right\rceil - 1
+ \frac{2a}{n}}{2a + \ln \left\lceil \frac{n}{2a} \right\rceil - 1 +
\frac{2a}{n}},
$$
then the $(a:b)$ game ${\mathcal T}_n$ is Breaker's win.
\end{theorem}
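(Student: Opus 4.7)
The plan is to design an isolation strategy for Breaker. We call a vertex $v$ \emph{alive} at a given moment if Maker has not yet claimed an edge incident to $v$, and write $A$ for the set of currently alive vertices and $c(v)$ for the current number of Breaker edges at $v$. The key point is that once some alive $v$ has $c(v) = n - 1$, every edge at $v$ is Breaker's, so Maker can never touch $v$; hence Maker's graph omits $v$ and Breaker wins. Breaker's aim will therefore be to force $c(v) = n - 1$ for some $v \in A$.

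Maker can kill at most two alive vertices per edge he plays (by choosing an edge with both endpoints in $A$), so $|A|$ shrinks by at most $2a$ per Maker move, and the window during which isolation remains feasible is bounded by $T := \lceil n/(2a) \rceil$ Maker moves. Within this window, Breaker's strategy will be: in each of his first $T-1$ moves, distribute the $b$ edges among currently alive vertices as evenly as possible (all alive--alive while $b \le \binom{|A|}{2}$, then mixing in alive--dead edges once $|A|$ is too small); on his final effective move, play all $b$ edges at a currently highest-$c$ alive vertex $v^*$, isolating it provided the pre-move value $c(v^*)$ satisfies $c(v^*) \ge n - 1 - b$.

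Tracking $\phi_k := |A_k|^{-1}\sum_{v\in A_k} c(v)$, balanced play keeps $\max_{v \in A_k} c(v) = \phi_k$, and since Maker's kills remove vertices of roughly average $c$-value the recursion $\phi_k \ge \phi_{k-1} + 2b/|A_{k-1}|$ holds (with a weaker increment of $\tfrac{|A_{k-1}|-1}{2}+\tfrac{b}{|A_{k-1}|}$ in the constrained late rounds, when $b > \binom{|A_{k-1}|}{2}$). Imposing $\phi_{T-1} \ge n - 1 - b$ and evaluating the resulting harmonic-type sum should produce the closed-form lower bound on $b$ stated in the theorem. The main technical hurdles we anticipate are (i) verifying that Breaker's approximate balancing is realizable round by round, given the pairing structure of the $b$ edges and the depletion of unclaimed incident edges; and (ii) performing the summation carefully enough to match the specific denominator $2a + \ln\lceil n/(2a)\rceil - 1 + 2a/n$ appearing in the bound---the lower-order terms there come precisely from the late rounds when $|A|$ becomes comparable to $\sqrt{b}$ and alive--dead edges begin to dominate.
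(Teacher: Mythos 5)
Your plan shares the paper's core idea—Breaker isolates some vertex by keeping Breaker-degrees balanced over the shrinking set of Maker-untouched vertices, with the window bounded by $\lceil n/(2a)\rceil$ rounds since each Maker edge kills at most two such vertices. But the paper implements this via a clean reduction to the Box Game $B(n, n(n-1), b, 2a)$: Breaker mentally plays BoxMaker, mapping each of his $b$ claims to one box $e_i$ (one endpoint of the edge), while each Maker edge charges two boxes. This deliberately discards the second endpoint of Breaker's edges, so BoxMaker's move contributes only $b$ (not $2b$) to the boxes, but in exchange the Box Game abstraction has no pairing constraint at all: BoxMaker can always put his $b$ claims wherever he likes. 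Lemma~\ref{lem1} and Lemma~\ref{lem2} then supply the exact harmonic estimate, and the theorem's bound drops out.

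Your direct $\phi$-tracking tries to keep the factor $2$, which is why your heuristic bound $b \gtrsim an/(a+L)$ (with $L=\ln\lceil n/(2a)\rceil$) comes out smaller than the paper's $b\gtrsim 2an/(2a+L)$. That already signals the two analyses do not lead to the same closed form, so the step ``evaluating the resulting harmonic-type sum should produce the closed-form lower bound stated in the theorem'' cannot be taken for granted. More seriously, the late-round increment you write, $\tfrac{|A_{k-1}|-1}{2}+\tfrac{b}{|A_{k-1}|}$, is obtained by assuming all $\binom{|A_{k-1}|}{2}$ alive--alive pairs are available to Breaker. They are not: by the time $\phi$ is large, Breaker has already claimed a large fraction (potentially all) of the alive--alive pairs in prior rounds, so the factor-$2$ contribution collapses much earlier than the threshold $b>\binom{|A|}{2}$. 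You flag this as hurdle (i), but it is not a detail one can expect to just ``verify'': it changes the recursion in precisely the regime that produces the lower-order terms you say should match the denominator $2a+\ln\lceil n/(2a)\rceil-1+2a/n$. Without a concrete bookkeeping that tracks how many alive--alive edges Breaker has already used per vertex (or, equivalently, without the canonical-hypergraph invariant that the Box Game gives for free), the recursion $\phi_k\ge\phi_{k-1}+2b/|A_{k-1}|$ is not justified, and the proposal as written has a genuine gap. If you want to pursue the direct route, the cleanest fix is essentially to re-derive the Box Game reduction: throw away the second endpoint, count only $b$ per Breaker move, and prove the balanced-box recursion outright--which is exactly what the paper's Lemmas~\ref{lem1} and~\ref{lem2} do.
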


Finally, for very large values of $a$ we obtain a nontrivial bound on $b$ which suffices to ensure Breaker's win.

\begin{theorem} \label{th:veryLargea}
If $a < \frac{n}{2}$ and $b \geq n-2$, then the $(a:b)$ game
${\mathcal T}_n$ is Breaker's win.
\end{theorem}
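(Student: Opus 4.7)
The idea is to exploit the large Breaker-bias $b\geq n-2$ in order to isolate a vertex of $K_n$ in Maker's graph within just two rounds. In the first round Breaker creates a ``threat'' at a single vertex by leaving exactly one unclaimed edge there, forcing Maker to answer. In the second round Breaker pivots to a vertex $v_2$ that Maker has not yet touched, which exists because $a<n/2$, and exploits the fact that Breaker has already ``accidentally'' claimed one edge at $v_2$ during the first round, so that $n-2$ further moves suffice to claim every edge incident to $v_2$.

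More precisely, the strategy I would implement for Breaker is as follows. In round~$1$ Breaker picks any vertex $v_1$ and claims $n-2$ of the $n-1$ edges incident to $v_1$, leaving one unclaimed, say $v_1x$ (any extra $b-(n-2)$ moves are spent arbitrarily). If Maker fails to claim $v_1x$ in his reply, Breaker claims it in round~$2$ and $v_1$ is isolated. Otherwise, Maker uses one of his $a$ moves on $v_1x$ and his other $a-1$ moves elsewhere, so the set $V_0$ of vertices incident to a Maker-edge satisfies $|V_0|\leq 2a$. In round~$2$ Breaker picks any vertex $v_2\notin V_0$; such a vertex exists since $|V_0|\leq 2a<n$. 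Because $v_2\notin V_0\supseteq\{v_1,x\}$, the edge $v_1v_2$ is one of the $n-2$ edges Breaker claimed at $v_1$ in round~$1$, and because $v_2$ has no Maker-edge, exactly $n-2$ edges at $v_2$ are still unclaimed. Breaker claims all of them in round~$2$, which is possible because $b\geq n-2$. After this move every edge incident to $v_2$ is Breaker's, so $v_2$ is permanently isolated in Maker's graph and Breaker wins.

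The hypotheses are used sharply and in just two places: $a<n/2$ guarantees the virgin target $v_2$ after Maker's first move, while $b\geq n-2$ is used once to leave a single threatening edge at $v_1$ in round~$1$ and once more to cover all unclaimed edges at $v_2$ in round~$2$. I do not anticipate any real obstacle; the argument is a short combinatorial two-move scheme with no calculation, and the only thing to double-check is that $v_2$ is automatically distinct from $v_1$ and from $x$, which follows directly from $v_2\notin V_0$.
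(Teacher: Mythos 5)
Your proof is correct and reaches the same two-round conclusion as the paper, but with a different opening move for Breaker. The paper has Breaker claim a (near-)perfect matching in round one, so that \emph{every} vertex already has Breaker-degree at least~$1$; after Maker's reply some vertex $w$ is Maker-isolated (since $2a<n$), at most $n-2$ edges at $w$ remain free, and Breaker claims them all in round two — no case analysis is needed. You instead concentrate round one on a single vertex $v_1$, claiming all but one edge $v_1x$ there; this forces a dichotomy, and in the nontrivial case you pivot to a fresh vertex $v_2\notin V_0$ for which $v_1v_2$ is already Breaker's. Both arguments are short and elementary and use the hypotheses $b\geq n-2$ and $a<n/2$ in essentially the same roles; the matching opening is slightly cleaner because it creates a universal one-edge head start at every vertex simultaneously and thereby avoids the case split. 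One wording nit in your write-up: ``exactly $n-2$'' edges at $v_2$ still unclaimed should read ``at most $n-2$,'' since Breaker's $b-(n-2)$ arbitrary extra edges from round one might already touch $v_2$ — but at most $n-2$ is exactly what you need.
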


For Maker's win we prove the following sufficient conditions, covering the whole range of
possible values of $a$.

\begin{theorem} \label{t::MakersWin}
If  $a = o\left( \sqrt{\frac{n}{\ln n}}\right)$ and
$$
b < \frac{a\left(n-\frac{n}{a \ln n} + \frac{a-1}{2} \ln \left(\frac{n}{a^2 \ln
n}\right)\right)}{\ln n + a + \ln\ln n + 4},
$$
or if $a = \Omega\left( \sqrt{\frac{n}{\ln n}}\right)$,  $a \leq \frac{n-1}{2}$ and $b <
\frac{an}{a+2\ln n - 2\ln a + 4}$, then the $(a:b)$ game ${\mathcal T}_n$ is Maker's win.
\end{theorem}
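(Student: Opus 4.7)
My plan is to equip Maker with an explicit strategy in the spirit of the Gebauer--Szab\'o priority strategy from the $(1:b)$ setting, generalised to $(a:b)$. In the critical threshold regime for the Connectivity game the bottleneck for Maker is avoiding an isolated vertex in her final graph: once minimum degree at least $1$ is secured, joining the small number of surviving components into a spanning connected subgraph can be handled either by a short post-processing phase (the strict inequality in the statement leaves enough room for this) or by running the same min-degree-type strategy on a slightly modified goal-set (e.g.\ on a random bipartition of $V(K_n)$). We therefore focus on the min-degree part.

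\textbf{Strategy and potential.} Call a vertex $v$ \emph{dangerous} at a given moment if Maker has not yet claimed any edge at $v$, and let $d_B(v)$ denote the number of Breaker edges at $v$. Fix a parameter $\alpha > 1$ to be tuned close to $1 + c/b$, and track the potential
$$\Phi \;=\; \sum_{v \text{ dangerous}} \alpha^{d_B(v)}.$$
On each of her turns Maker picks the $a$ dangerous vertices with the largest current $d_B(v)$ and claims one free edge at each of them, moving those vertices out of the dangerous set. As long as $\Phi < \alpha^{n-1}$ throughout the game, no dangerous vertex can reach $d_B(v) = n-1$, and so Maker's final graph has minimum degree at least $1$.

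\textbf{Per-round analysis and parameter extraction.} The heart of the proof is a tight round-by-round bound on $\Phi$. Breaker's $b$ claims multiply at most $2b$ dangerous contributions by a factor $\alpha$, giving an additive growth of at most $2b(\alpha-1)\Phi_{\max}$; Maker's $a$ extractions each erase a dangerous contribution at least as large as the top-$a$ average of $\Phi$. Iterating over the $\sim \binom{n}{2}/(a+b)$ rounds of play, tuning $\alpha = 1 + \Theta(1/b)$, and using the initial value $\Phi_0 = n$, the inequality ``$\Phi$ survives the whole game'' unpacks into the bounds on $b$ stated in the theorem. The two cases correspond to two regimes of this analysis: when $a = o(\sqrt{n/\ln n})$ the top-$a$ dangerous vertices are essentially disjoint from Breaker's fresh $b$ edges, each Maker extraction delivers close to a full single-extraction gain, and this is what produces the fine corrections $+\tfrac{a-1}{2}\ln(n/(a^2 \ln n))$ and $-\tfrac{n}{a\ln n}$ in the numerator; when $a = \Omega(\sqrt{n/\ln n})$ the top-$a$ list and Breaker's target set begin to overlap and one settles for the cruder bound $b < an/(a + 2\ln n - 2\ln a + 4)$.

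The main technical obstacle will be precisely this per-round accounting in the small-$a$ case: proving that Maker's $a$ parallel top-priority moves deliver a gain that is essentially $a$ times the single-move gain, with the sharp $\tfrac{a-1}{2}\ln(n/(a^2 \ln n))$ refinement, requires a careful worst-case analysis of the coincidence pattern between Breaker's latest $b$ edges and Maker's top-$a$ danger list. The tuning of $\alpha$, the $\Phi_0 = n$ initialisation, and the post-processing that converts minimum degree at least $1$ into full connectivity are, by comparison, routine in this framework.
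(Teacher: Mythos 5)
Your proposal diverges from the paper's proof in two substantive ways, and one of them is a genuine gap rather than an alternative route.

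First, the paper's Maker strategy directly builds a spanning tree: Maker's graph is a forest throughout, ``dangerous'' is defined at the level of \emph{components} (a component is dangerous iff it has at most $2b/a$ vertices), and each component carries exactly one active vertex, which is the one Maker prioritises and extends from. Your notion of ``dangerous'' is a per-vertex condition ($d_M(v)=0$), i.e.\ you are playing the Positive Minimum Degree game, not the Connectivity game. The paper explicitly remarks that a Connectivity win implies a Min-Degree win, \emph{not} the converse, and your ``post-processing'' paragraph, which is supposed to supply the converse direction, is unsubstantiated. At the threshold bias, once Maker has secured $\delta(M)\geq1$, the game is essentially over, so there is no ``short post-processing phase'' left to play; and the suggestion of rerunning the strategy against a random bipartition is not developed and would at best give a weaker constant, not the quoted bounds. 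So as written, the proposal proves the wrong theorem unless the min-degree-to-connectivity bridge is actually built, and that bridge is precisely the hard part of Gebauer--Szab\'o.

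Second, your analysis is a Beck-style exponential potential $\Phi=\sum_v \alpha^{d_B(v)}$, whereas the paper runs the Gebauer--Szab\'o backward \emph{linear} average: starting from the assumed failure move $s$ and iterating $\overline{\mathcal D}_{M_{s-i}}(A_{s-i})\geq\overline{\mathcal D}_{B_{s-i+1}}(A_{s-i+1})$ together with the two per-round bounds of Lemma~\ref{s3}. These are genuinely different machines. The refined terms $-\tfrac{n}{a\ln n}$ and $+\tfrac{a-1}{2}\ln\bigl(\tfrac{n}{a^2\ln n}\bigr)$ in the numerator come out of the paper's second-moment-style accounting of Breaker edges with \emph{both} endpoints among the surviving priority vertices (the function $g$ in Lemma~\ref{s3}(ii)), used only for the last $k=\lfloor n/(a^2\ln n)\rfloor$ rounds; the earlier rounds use the cruder bound in Lemma~\ref{s3}(i). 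You gesture at an analogous ``coincidence pattern'' accounting but explicitly flag it as unresolved, and it is not clear that the multiplicative potential framework can reproduce the same correction terms, because the exponential potential smears the contribution of high-danger vertices in a way that loses exactly this refinement. In short: different target game, different potential, and the step you defer is the one that carries the theorem.
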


A straightforward analysis of the results obtained in
Theorems~\ref{th::bwin},~\ref{th:largea},~\ref{th:veryLargea} and~\ref{t::MakersWin}, yields
the following estimates for the generalized threshold bias $b_0(a)$.

\begin{corollary} \label{summary}
\begin{description}
\item [$(i)$] If $a = o(\ln n)$, then $\frac{an}{\ln n} - (1 + o(1)) \frac{an(\ln\ln
n+a)}{\ln^2 n} < b_0(a) < \frac{an}{\ln n} - (1 - o(1)) \frac{an \ln
a}{\ln^2 n}$.
\item [$(ii)$] If $a = c \ln n$ for some $0 < c \leq 1$, then $(1 - o(1)) \frac{cn}{c+1} <
b_0(a) < \min\left\{c n, (1+o(1))\frac{2 n}{3}\right\}$.
\item [$(iii)$] If $a = c \ln n$ for some $c > 1$, then $(1 - o(1)) \frac{cn}{c+1} < b_0(a) < (1 + o(1))\frac{2cn}{2c+1}$.
\item [$(iv)$] If $a = \omega(\ln n)$ and $a = o\left( \sqrt{\frac{n}{\ln n}}\right)$, then $n -
\frac {n\ln n}{a} < b_0(a) < n - (1-o(1))\frac{n\ln \left( n/a\right)} {2a}$.
\item [$(v)$] If $a = \Omega\left( \sqrt{\frac{n}{\ln n}}\right)$ and $a = o(n)$, then $n - (1 + o(1))
\frac{2n\ln \left(n/a\right)}{a} < b_0(a) < n - (1 - o(1)) \frac{n \ln \left(n/a\right)}{2a}$.
\item [$(vi)$] If $a = c n $ for $0 < c < \frac {1}{2e}$, then $n -
\frac{2 \ln (1/c) + 4}{c} < b_0(a) < n-2 - \frac{1-2c}{2c}
\left(\ln(\frac{1}{2 c})-1\right) + o(1)$.
\item [$(vii)$] If $a = c n$ for $\frac{1}{2e} \leq c < \frac {1}{2}$, then $n - \frac{2 \ln (1/c) + 4}{c} < b_0(a) < n-2$.
\end{description}
\end{corollary}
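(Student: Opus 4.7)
The plan is to derive each of the seven estimates by plugging the specified asymptotic form of $a$ into Theorems~\ref{th::bwin}, \ref{th:largea}, \ref{th:veryLargea}, and \ref{t::MakersWin}, and then simplifying. For the upper bound on $b_0(a)$ we select whichever Breaker-winning theorem has its hypothesis met: Theorem~\ref{th::bwin} handles case $(i)$ and the $c\le 1$ part of $(ii)$; Theorem~\ref{th:largea} handles cases $(iii)$--$(vi)$; and Theorem~\ref{th:veryLargea} handles $(vii)$. For the lower bounds, Theorem~\ref{t::MakersWin} supplies both: its first formula covers the ranges $(i)$--$(iv)$ where $a = o(\sqrt{n/\ln n})$, and its second formula covers $(v)$--$(vii)$.

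Each case then reduces to a routine asymptotic calculation. For $(i)$ we use $\ln(an) = \ln n + \ln a$ and expand $(1+\ln a/\ln n)^{-1}$ to first order for the upper bound, while for the lower bound we observe that in the relevant range the numerator of Theorem~\ref{t::MakersWin} is $an(1+o(1))$ and the denominator is $\ln n + a + \ln\ln n + O(1)$. For cases $(iii)$--$(vi)$ the complicated expression of Theorem~\ref{th:largea} can be rewritten in the form $n - \frac{(n-2a)\ln\lceil n/(2a)\rceil - n + O(a)}{2a + \ln\lceil n/(2a)\rceil - 1 + 2a/n}$, from which the dominant behaviour $n - (1+o(1))\frac{n\ln(n/a)}{2a}$ follows once we specialize to $a = c\ln n$, $a = \omega(\ln n)$, or $a = cn$ with $c < \tfrac{1}{2e}$. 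For the lower bounds in $(v)$--$(vii)$, the second formula of Theorem~\ref{t::MakersWin} expands as $\frac{an}{a+2\ln(n/a)+4} = n - (1+o(1))\frac{n(2\ln(n/a)+4)}{a}$, which at $a = cn$ gives $n - \frac{2\ln(1/c)+4}{c}$.

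The one case needing an extra step is the $(1+o(1))\frac{2n}{3}$ upper bound in $(ii)$: for $a = c\ln n$ with $c \le 1$ the hypothesis $a \ge (1+\varepsilon)\ln n$ of Theorem~\ref{th:largea} is not met. Here I would invoke bias monotonicity, namely that $a \mapsto b_0(a)$ is nondecreasing because Maker can always waste surplus moves; hence for any fixed $\varepsilon > 0$, $b_0(c\ln n) \le b_0((1+\varepsilon)\ln n)$, and applying Theorem~\ref{th:largea} to the larger argument yields a bound tending to $\frac{2n}{3}$ as $\varepsilon \to 0$. I expect the main obstacle of the calculation to be the bookkeeping in cases $(iv)$ and $(v)$: in the intermediate regime $a = \omega(\ln n)$ with $a = o(n)$, one must carefully track which of the terms $\ln\lceil n/(2a)\rceil$, $2a$, $2a/n$ and the constants in Theorem~\ref{th:largea} are dominant, and verify that the resulting error terms are of genuinely smaller order than the gap between the Maker and Breaker bounds on $b_0(a)$.
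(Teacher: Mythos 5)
Your proposal is correct and matches the paper's own derivation: the paper assigns Theorem~\ref{th::bwin} to the upper bound in $(i)$ and the $cn$ bound in $(ii)$, Theorem~\ref{th:largea} to $(iii)$--$(vi)$, Theorem~\ref{th:veryLargea} to $(vii)$, Theorem~\ref{t::MakersWin} (with the same split between its two formulas) to all lower bounds, and obtains the $(1+o(1))\tfrac{2n}{3}$ bound in $(ii)$ exactly as you do, by bias monotonicity of $b_0$ in $a$ together with case $(iii)$.
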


All lower bounds on the threshold bias $b_0(a)$ in Corollary~\ref{summary} are obtained via
Theorem~\ref{t::MakersWin}. The upper bounds are obtained as follows. Theorem~\ref{th::bwin}
is used in $(i)$, Theorem~\ref{th:largea} is used in $(iii)$, $(iv)$, $(v)$ and $(vi)$ and
Theorem~\ref{th:veryLargea} is used in $(vii)$. In $(ii)$, the upper bound of $cn$ is obtained
from Theorem~\ref{th::bwin}, whereas the upper bound $(1+o(1)) \frac{2n}{3}$ is obtained
from $(iii)$ by the bias monotonicity of Maker-Breaker games.

\begin{figure}[htb]
  \centering %
  \epsfig{file=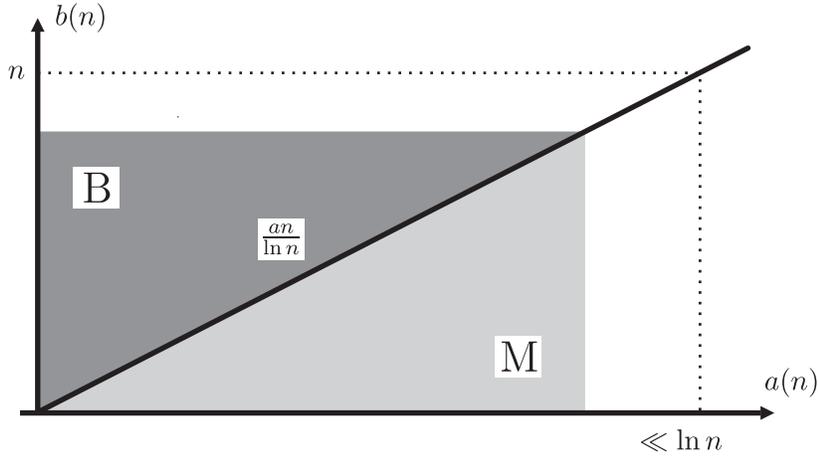, scale=1}
  \caption{Leading term of the threshold bias for $a = o(\ln n)$.\label{f:1}}
  \bigskip
\end{figure}

\begin{figure}[htb]
  \centering %
  \epsfig{file=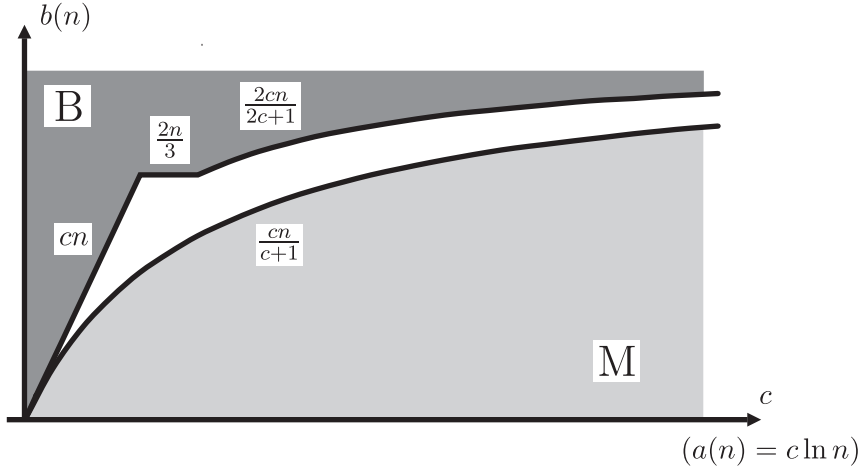, scale=1}
  \caption{Bounds on the threshold bias for $a = c \ln n$, where $c$ is a positive real number.\label{f:2}}
  \bigskip
\end{figure}

Corollary~\ref{summary} gives fairly tight bounds for the threshold bias on the
whole range of the bias $a$. In particular, for $a = o(\ln n)$, the leading term of the
threshold bias is determined exactly, which is depicted in Figure~\ref{f:1}. Then, if $a =
c \ln n$, where $c$ is a positive real number, $(ii)$ and $(iii)$ imply that the threshold bias is
linear in $n$, and the upper bound we obtain is a constant factor away from the lower bound, as shown
in Figure~\ref{f:2}. If $a = \omega(\ln n)$ and $a = o(n)$, it follows by $(iv)$ and $(v)$ that
the leading term of the threshold bias is $n$, and moreover, we obtain upper and lower bounds
for the second order term which are a constant factor away from each other, see
Figure~\ref{f:3}. Finally, for $a = c n$, where $0 < c < 1/2$ is a real number, $(vi)$
and $(vii)$ imply that the threshold bias is just an additive constant away from $n$ as shown in
Figure~\ref{f:4}. For larger values of $a$ we have the trivial upper bound of $b_0(a) \leq n-1$ and
the same lower bound as in $(vii)$ by monotonicity.

\begin{figure}[htb]
  \centering %
  \epsfig{file=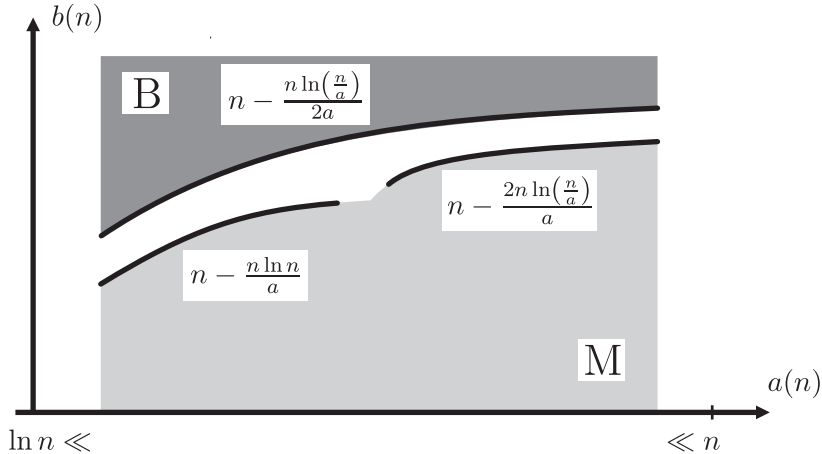, scale=1}
  \caption{Bounds on the threshold bias for $a = \omega(\ln n)$ and $a = o(n)$. \label{f:3}}
  \bigskip
\end{figure}

\begin{figure}[htb]
  \centering %
  \epsfig{file=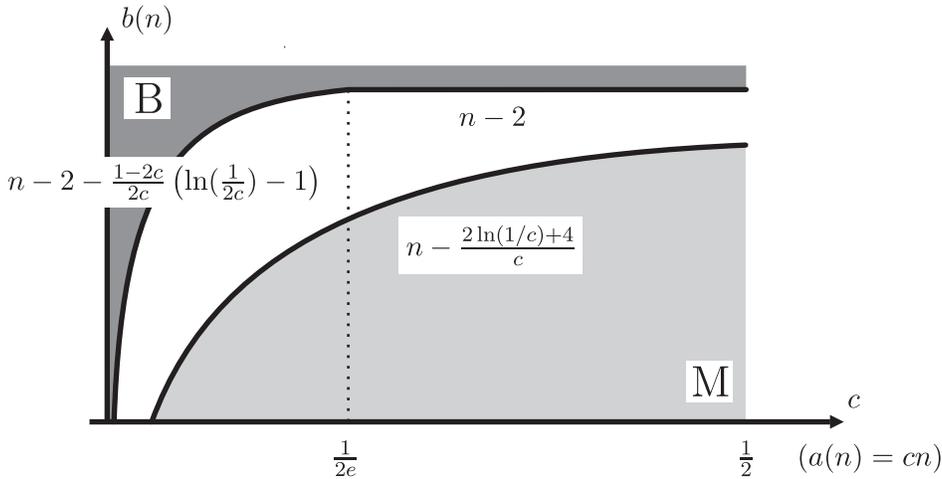, scale=1}
  \caption{Bounds on the threshold bias for $a = c n$, where $0 < c < 1/2$ is a real number.\label{f:4}}
  \bigskip
\end{figure}

For the sake of simplicity and clarity of presentation, we do not make a par\-ti\-cu\-lar
effort to optimize some of the constants obtained in our proofs. We also omit floor and
ceiling signs whenever these are not crucial. Most of our results are asymptotic in nature and
whenever necessary we assume that $n$ is sufficiently large. Throughout the paper, $\ln$
stands for the natural logarithm.

Our graph-theoretic notation is standard and follows that of~\cite{West}. In particular, we
use the following. For a graph $G$, $V(G)$ and $E(G)$ denote its sets of vertices and edges
respectively, $v(G) = |V(G)|$ and $e(G) = |E(G)|$. For a vertex $u \in V(G)$, $d_G(u)$ denotes
the degree of $u$ in $G$. At any point during the game ${\mathcal T}_n$, we denote by $M$
(respectively $B$) the graph which is spanned by the edges Maker (respectively Breaker) has
claimed thus far.

For every positive integer $j$ we denote the \emph{$j$th harmonic number} by $H_j$, that is,
$H_j = \sum_{i=1}^j 1/i$, for every $j \geq 1$. We will make use of the following known fact,
\begin{equation} \label{harmonic}
\ln j + 1/2 <  H_j < \ln j + 2/3 \textrm{ for sufficiently large } j.
\end{equation}

\noindent The rest of the paper is organized as follows. In Section~\ref{box} we analyze the
$(a : b)$ Box Game -- a classical game, introduced in~\cite{ChEr} in order to analyze
Breaker's strategy in Connectivity game. In Section~\ref{sec::bwin} we prove
Theorems~\ref{th::bwin},~\ref{th:largea} and~\ref{th:veryLargea}. In
Section~\ref{sec::MakerStrategy} we prove Theorem~\ref{t::MakersWin}. Finally, in
Section~\ref{sec::openprob} we present some open problems.

\section{$(a:b)$ Box Game} \label{box}
In order to present Breaker's winning strategy for the $(a:b)$ Maker-Breaker Connectivity
game, we first look at the so-called Box Game. The Box Game was first introduced by Chvat\'al
and Erd\H{o}s in~\cite{ChEr}. A hypergraph ${\mathcal H}$ is said to be of type $(k,t)$ if
$|{\mathcal H}| = k$, its hyperedges $e_1, e_2, \ldots, e_k$ are pairwise disjoint, and the
sum of their sizes is $\sum_{i=1}^k |e_i| = t$. Moreover, the hypergraph ${\mathcal H}$ is
said to be \emph{canonical} if $||e_i| - |e_j|| \leq 1$ holds for every $1 \leq i,j \leq k$.
The board of the Box Game $B(k,t,a,b)$ is a canonical hypergraph of type $(k,t)$. This game is
played by two players, called \emph{BoxMaker} and \emph{BoxBreaker}, with BoxMaker having the
first move (in~\cite{ChEr} the first player is actually BoxBreaker, but the version where
BoxMaker is the first player is more suitable for our needs). BoxMaker claims $a$ vertices of
${\mathcal H}$ per move, whereas BoxBreaker claims $b$ vertices of ${\mathcal H}$ per move.
BoxMaker wins the Box Game on ${\mathcal H}$ if he can claim all vertices of some hyperedge of
${\mathcal H}$, otherwise BoxBreaker wins this game.

In~\cite{HLV}, Hamidoune and Las Vergnas have provided, in particular, a sufficient and
necessary condition for BoxMaker's win in the Box Game $B(k,t,a,b)$ for any positive integers
$k,t,a$ and $b$. Unfortunately, this condition can rarely be used in practise. A more
applicable criterion for BoxMaker's win was also provided in~\cite{HLV}, but it turns out not
to be strong enough for certain values of $a$ and $b$. Hence, in this section, we derive a
sufficient condition for BoxMaker's win in the Box Game $B(k,t,a,b)$ which is better suited to
our needs.

Given positive integers $a$ and $b$, we define the following function,
\begin{eqnarray*}
f(k; a, b) := \left\{
\begin{array}{rl}
(k-1)(a+1) \;\;\;\;, & \mbox{if } 1 \leq k \leq b\\
k a \;\;\;\;\;\;\;\;\;\;, & \mbox{if } b < k \leq 2b\\
\left\lfloor \frac{k(f(k-b;a,b)+a-b)}{k-b}\right \rfloor, & \mbox{otherwise}
\end{array}
\right..
\end{eqnarray*}

First we prove the following technical result.

\begin{lemma} \label{lem1}
Let $a,b$ and $k$ be positive integers satisfying $k > b$ and $a - b - 1 \geq 0$, then
\begin{equation} \label{eq::lem1}
f(k;a,b) \geq ka - 1 + \frac {k(a-b-1)}{b} \sum_{j=2}^{\left\lceil k/b
\right\rceil - 1} \frac{1}{j} \enspace .
\end{equation}
\end{lemma}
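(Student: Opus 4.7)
The plan is to prove the inequality by induction on $k$, using the recursive definition of $f$. The base case handles all values $k$ with $b < k \le 2b$, while the inductive step reduces $k$ to $k-b$ via the third clause of the definition.

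For the base case $b < k \le 2b$, I observe that $\lceil k/b \rceil = 2$, so the sum $\sum_{j=2}^{\lceil k/b\rceil -1} 1/j$ is empty and the right-hand side of~(\ref{eq::lem1}) reduces to $ka-1$. Since the definition gives $f(k;a,b) = ka$, the inequality holds trivially. For the inductive step, assume $k > 2b$ and that the inequality holds for $k-b$ (which satisfies $k-b > b$). From the recursion,
$$
f(k;a,b) \;\ge\; \frac{k\bigl(f(k-b;a,b) + a - b\bigr)}{k-b} - 1.
$$
Plugging in the inductive bound $f(k-b;a,b) \ge (k-b)a - 1 + \frac{(k-b)(a-b-1)}{b}\sum_{j=2}^{\lceil (k-b)/b\rceil -1} 1/j$ and using $\lceil (k-b)/b\rceil = \lceil k/b\rceil - 1$, a short computation yields
$$
f(k;a,b) \;\ge\; ka + \frac{k(a-b-1)}{k-b} - 1 + \frac{k(a-b-1)}{b}\sum_{j=2}^{\lceil k/b\rceil -2} \frac{1}{j}.
$$

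To match the claimed bound I only need to show that
$$
\frac{k(a-b-1)}{k-b} \;\ge\; \frac{k(a-b-1)}{b}\cdot\frac{1}{\lceil k/b\rceil - 1},
$$
which, since $a-b-1\ge 0$, is equivalent to $b\bigl(\lceil k/b\rceil - 1\bigr) \ge k - b$, i.e.\ $\lceil k/b\rceil \ge k/b$. This is the defining property of the ceiling, so the induction closes.

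The only place where any real care is needed is bookkeeping the index shift $\lceil k/b\rceil \mapsto \lceil (k-b)/b\rceil = \lceil k/b\rceil - 1$ and the loss of $1$ coming from the floor in the recursion; the hypothesis $a-b-1\ge 0$ is exactly what ensures that the extra term produced by the induction step has the correct sign. Everything else is routine algebra.
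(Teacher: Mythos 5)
Your proof is correct, and it is essentially the same argument as the paper's: the paper unrolls the recursion $k \mapsto k-b$ all the way down and bounds the resulting telescoping sums, while you package the identical computation as a clean induction on $k$, closing the gap with the observation $b\bigl(\lceil k/b\rceil - 1\bigr) \ge k - b$, which is exactly the $i=1$ case of the bound $\frac{1}{k-ib} \ge \frac{1}{(\lceil k/b\rceil - i)b}$ that the paper applies to every term at once.
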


\begin{proof}
If $b < k \leq 2b$, then the assertion of the lemma holds since $k a \geq k a - 1$.

Otherwise, let $x = \lceil k/b \rceil - 2$. Note that $x$ is the unique positive integer for
which $b < k - x b \leq 2 b$. For every $0 \leq y < x$ we have

\begin{eqnarray}
\frac{k}{k-yb} \cdot f(k-yb;a,b) &=& \frac{k}{k-yb}
\left\lfloor\frac{(k-yb)(f(k-(y+1)b;a,b)+a-b)}{k-(y+1)b}\right\rfloor \nonumber\\
&\geq& \frac{k}{k-yb} \left(\frac{(k-yb)(f(k-(y+1)b;a,b)+a-b)}{k-(y+1)b} - 1\right) \nonumber\\
&=& \frac{k(a-b)}{k-(y+1)b} - \frac{k}{k-yb} + \frac{k}{k-(y+1)b} \cdot f(k-(y+1)b;a,b).
\qquad \label{suma}
\end{eqnarray}

Applying the substitution rule~\eqref{suma} repeatedly for every $0 \leq y < x$ and using the
fact that $\frac{k}{k-xb} \cdot f(k-xb;a,b) = k a$, we obtain

\begin{eqnarray} \label{sumiranje}
f(k;a,b) &\geq& k a - 1 + \sum_{i=1}^{\left\lceil k/b
\right\rceil - 2} \frac{k(a-b)}{k-ib} - \sum_{j=1}^{\left\lceil k/b
\right\rceil - 3} \frac{k}{k-jb} \nonumber\\
&\geq& k a - 1 + k(a-b-1) \sum_{i=1}^{\left\lceil k/b
\right\rceil - 2} \frac{1}{k-ib} \enspace.
\end{eqnarray}

Since $\frac{1}{k-ib} \geq \frac{1}{\left(\lceil k/b \rceil - i \right)b}$
holds for every $1 \leq i \leq \left\lceil k/b \right\rceil - 2$, it follows by~\eqref{sumiranje} that
\begin{eqnarray*}
f(k;a,b) &\geq& k a - 1 + k(a-b-1) \sum_{i=1}^{\left\lceil k/b
\right\rceil - 2} \frac{1}{\left(\lceil k/b \rceil - i \right)b}\\
&=& k a - 1 + k(a-b-1) \sum_{j=2}^{\left\lceil k/b
\right\rceil - 1} \frac{1}{jb}\\
&=& k a - 1 + \frac{k(a-b-1)}{b} \sum_{j=2}^{\left\lceil k/b
\right\rceil - 1} \frac{1}{j} \enspace.
\end{eqnarray*}
\end{proof}

\begin{lemma} \label{lem2}
If $t \leq f(k;a,b) + a$, then BoxMaker has a winning strategy for $B(k,t,a,b)$.
\end{lemma}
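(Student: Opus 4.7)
I plan to prove the lemma by induction on $k$, with BoxMaker's strategy tuned to the recursive structure of the function $f$. In each round, BoxMaker aims to reduce the canonical game on $k$ boxes to one to which the inductive hypothesis applies, using the fact that BoxBreaker can eliminate at most $b$ boxes per round (by placing a single vertex in each).

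For the base case $1 \leq k \leq b$, the bound $t \leq f(k;a,b) + a = (k-1)(a+1) + a = ka + (k-1)$ combined with canonicity forces the smallest box to have at most $\lfloor t/k \rfloor \leq a$ vertices, so BoxMaker claims it in one move and wins. For the intermediate regime $b < k \leq 2b$ we have $t \leq (k+1)a$, equivalently $\sum_{i} \max(s_i - a, 0) \leq a$, where $s_1,\dots,s_k$ are the box sizes. Hence BoxMaker can distribute his first $a$ vertices across the overflowing boxes to leave every box with at most $a$ unclaimed vertices. Whichever $b$ boxes BoxBreaker then touches, at least $k - b \geq 1$ boxes remain untouched (alive) with unclaimed count at most $a$, and BoxMaker completes one of them on his second move.

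For the inductive step $k > 2b$, BoxMaker plays his $a$ vertices in such a way that, after BoxBreaker responds by touching $b$ of the $k$ boxes, the sub-hypergraph on the $k - b$ untouched boxes satisfies the hypothesis of the lemma for parameter $k - b$, namely has total unclaimed count at most $f(k-b;a,b) + a$. The defining recursion $f(k;a,b) \leq k(f(k-b;a,b) + a - b)/(k-b)$ is precisely calibrated to make this reduction go through: the per-box budget $(f(\cdot;a,b)+a)/(\cdot)$ changes between the two parameters at exactly the rate at which one round of $(a+b)$ claimed vertices can be absorbed while losing $b$ of the $k$ boxes. The inductive hypothesis applied to the alive sub-hypergraph then yields BoxMaker's winning strategy.

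The main obstacle will be BoxBreaker's freedom to destroy boxes in which BoxMaker has already invested vertices, thereby wasting that effort; a naive strategy of concentrating all $a$ vertices in one box is easily defeated (as a small example with $k=3$, $a=5$, $b=2$, $t=20$ already shows). I expect the correct approach is for BoxMaker to spread his $a$ vertices across several small boxes in a way that roughly preserves canonicity of the post-move configuration, so that no single destructive response by BoxBreaker can simultaneously ruin BoxMaker's progress and leave an impossibly large alive remainder. Verifying that the chosen distribution, combined with the worst-case BoxBreaker response, lands inside the $k - b$ inductive hypothesis — and that the non-canonical alive sub-hypergraph is at worst as hard for BoxMaker as a canonical one of the same total size — is the heart of the argument, and this is exactly where the form of the recursion for $f$ is used.
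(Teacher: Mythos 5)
Your plan — induction on $k$, with BoxMaker opening each round by redistributing his $a$ claims so that the surviving free hypergraph stays canonical, then appealing to the inductive hypothesis on the $k-b$ boxes that BoxBreaker leaves untouched — is the route the paper takes, and your two base cases are handled identically.

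What you describe as ``the heart of the argument'' has a spurious part and a genuine part, and you execute neither. The spurious part is the worry that the alive sub-hypergraph may be non-canonical and thus require a separate comparison lemma. If BoxMaker plays to make the post-move free hypergraph \emph{exactly} canonical of type $(k,t')$ with $t'=t-a\leq f(k;a,b)$ (always possible: claim from a largest box one vertex at a time, winning outright if any box empties), then every free box size lies in $\{\lfloor t'/k\rfloor,\lceil t'/k\rceil\}$, hence so do the sizes of any $k-b$ untouched boxes; the alive sub-hypergraph is therefore itself canonical and the inductive hypothesis applies with no detour. You should replace ``roughly preserves canonicity'' by exact restoration — that is what keeps the recursion clean. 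The genuine part is the bound $\hat t\leq f(k-b;a,b)+a$ on the total free size $\hat t$ of the $k-b$ untouched boxes, which you assert but do not verify. It follows from two short estimates: the $k-b$ largest boxes of a canonical hypergraph of type $(k,t')$ have total size at most $\frac{k-b}{k}(t'+b)\leq \frac{k-b}{k}\,f(k;a,b)+b$, and rearranging the defining recursion $f(k;a,b)\leq\frac{k}{k-b}\bigl(f(k-b;a,b)+a-b\bigr)$ gives $\frac{k-b}{k}\,f(k;a,b)+b\leq f(k-b;a,b)+a$. Supplying this computation, together with the canonicity observation above, turns your plan into the paper's proof.
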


\begin{proof}
We prove this lemma by induction on $k$.

If $1 \leq k \leq b$, then $t \leq f(k;a,b) + a = (k-1)(a+1) + a = k(a+1) - 1$.
Since the board is a canonical hypergraph, it follows that there exists a
hyperedge of size at most $a$. In his first move, BoxMaker claims all vertices of such a hyperedge and thus wins.

If $b < k \leq 2b$, then $t \leq f(k;a,b) + a = k a + a$. In his first move, BoxMaker
claims $a$ vertices such that the resulting hypergraph is canonical of type $(k,t')$,
where $t' = t - a \leq k a$. It follows that every hyperedge is of size at most $a$.
Subsequently, in his first move, BoxBreaker claims $b < k$ vertices. Hence, there must
exist an hyperedge which BoxBreaker did not touch in his first move. In his second move,
BoxMaker claims all free vertices of such an hyperedge and thus wins.

Assume then that $k > 2b$ and assume that the assertion of the lemma holds for
every $k_1 < k$, that is, if $t_1 \leq f(k_1;a,b) + a$, then BoxMaker has a
winning strategy for $B(k_1,t_1,a,b)$. In his first move, BoxMaker
claims $a$ vertices such that the resulting hypergraph is canonical of type $(k,t')$,
where $t' = t - a \leq f(k;a,b)$. Subsequently, in his first move, BoxBreaker claims
$b$ board elements. Let $e_1, \ldots, e_{k-b}$ be arbitrary $k-b$ winning sets which BoxBreaker did not touch in his first move. Since BoxMaker's first move results in a canonical hypergraph, it follows that $\hat{t} := \sum_{i=1}^{k-b} |e_i| \leq \frac{k-b}{k} \cdot \left(t' + b \right) \leq \frac{k-b}{k} \cdot t' + b$. Moreover, it follows by the definition of $f$ that $f(k;a,b) \leq \frac{k}{k-b} \left(f(k-b;a,b) + a - b \right)$, implying that $f(k-b;a,b) \geq \frac{k-b}{k} \cdot f(k;a,b) + b - a \geq \hat{t} - a$. Hence, in order to prove that BoxMaker has a winning strategy for $B(k,t,a,b)$, it suffices to prove that BoxMaker has a winning strategy for $B(k-b,\hat{t},a,b)$. This however follows by the induction hypothesis since $k-b < k$ and since, as noted above, $\hat{t} \leq f(k-b;a,b) + a$.
\end{proof}

\section{Breaker's strategy for the $(a:b)$ Connectivity Game} \label{sec::bwin}

\textbf{Proof of Theorem~\ref{th::bwin}} Our proof relies on the approach of Chvat\'al and
Erd\H{o}s~\cite{ChEr}, who proved the special case $a=1$.

For technical reasons we will assume that $\varepsilon < 1/3$. This is allowed by the bias monotonicity of Maker-Breaker games.

Breaker's goal is to isolate some vertex $u \in V(K_n)$ in Maker's
graph. His strategy consists of two phases. In the first phase, he
claims all edges of a clique $C$ on $k := \left\lceil
\frac{an}{(a+1)\ln(an)}\right\rceil$ vertices, such that no vertex
of $C$ is touched by Maker, that is, $d_M(v) = 0$ for every $v \in
C$. In the second phase, he claims all free edges which are incident
to some vertex $v \in C$. If at any point during the game he cannot
follow the proposed strategy, then he forfeits the game.

\textbf{Breaker's strategy:}

\textit{First Phase.} For every $i \geq 1$, just before Breaker's $i$th move,
let $C_i$ denote the largest (breaking ties arbitrarily) clique in Breaker's
graph such that $d_M(v) = 0$ for every $v \in C_i$. Let $\ell_i$ be the largest
integer for which $b_i \leq b$, where $b_i := \binom{a+\ell_i}{2} + (a+\ell_i)|C_i|$.
If $|C_i| \geq k$, then the first phase is over and Breaker proceeds to the
second phase of his strategy. Otherwise, in his $i$th move, Breaker picks
$a+\ell_i$ vertices $u_1^{i}, \ldots, u_{a+\ell_i}^i$ such that $d_M(u_j^i) = 0$
for every $1 \leq j \leq a+\ell_i$, and then claims all edges of $\{(u_{j_1}^i, u_{j_2}^i)
: 1 \leq j_1 < j_2 \leq a+\ell_i\} \cup \{(u_j^i,w) : 1 \leq j \leq a+\ell_i, \; w \in V(C_i)\}$. He then claims additional $b - b_i$ arbitrary edges.

\textit{Second Phase.} Let $C$ be the clique Breaker has
built in the first phase, that is, $|C| \geq k$, $d_M(v) = 0$ holds for every
$v \in V(C)$ and $(u,v) \in E(B)$ holds for every $u,v \in V(C)$. In this
phase Breaker isolates some vertex $v \in V(C)$ in Maker's graph; the game ends as
soon as he achieves this goal (or as soon as $E(B \cup M) = E(K_n)$, whichever
happens first). In order to do so, he restricts his attention to the part of
the board spanned by the free edges of $E_2 := \{(u,v) : u \in V(C), v \in V(K_n)
\setminus V(C)\}$. In order to choose which edges of $E_2$ to claim in each move,
he consults an auxiliary Box Game $B(k,k(n-k),b,a)$ assuming the role of BoxMaker.

Note that if Breaker is able to follow the proposed strategy (without forfeiting the game) and
if BoxMaker has a winning strategy for $B(k,k(n-k),b,a)$, then Breaker wins the $(a:b)$
Connectivity game on $K_n$. Indeed, since $b \geq \binom{a+1}{2} + (a+1)(k-1)$, it follows
that $\ell_i \geq 1$ for as long as $|C_i| < k$. Since Maker can touch at most $a$ vertices of
$C_i$ in his $i$th move, it follows that $|C_{i+1}| \geq |C_i| + \ell_i$, assuming that,
before his $i$th move, there are at least $a + \ell_i$ vertices in $V(K_n \setminus C_i)$
which are isolated in Maker's graph. Moreover, since $|\{(u,v) \in E(K_n) \setminus (E(M) \cup
E(B)) : v \in V(K_n)\}| \leq n-k$ holds for every $u \in V(C)$, it follows that, if BoxMaker
has a winning strategy for $B(k,k(n-k),b,a)$, then Breaker, having built the clique $C$, has a
winning strategy for the $(a:b)$ Connectivity game on $K_n$.

Hence, it suffices to prove that Breaker can follow the proposed strategy without ever having
to forfeit the game and that BoxMaker has a winning strategy for $B(k,k(n-k),b,a)$.

Starting with the former, note that it follows from the definition of $\ell_i$ that,
if $|C_i| \leq \frac{b}{a+3} - \frac{a+2}{2}$, then $\ell_i \geq 3$. Similarly, if
$|C_i| \leq \frac{b}{a+2} - \frac{a+1}{2}$, then $\ell_i \geq 2$ and if $|C_i| \leq
k < \frac{b}{a+1} - \frac{a}{2}$, then $\ell_i \geq 1$. Hence, if Breaker does
not forfeit the game, then his clique reaches size $k$ within at most
$$
\frac{b}{3(a+3)} + \frac{b}{2(a+2)(a+3)} + \frac{b}{(a+1)(a+2)}
$$
moves. Since Maker can touch at most $2a$ vertices in a single move, it follows that during
the first phase of his strategy, the number of vertices which are either in Breaker's clique
or have positive degree in Maker's graph is at most
$$
\frac{b}{3(a+3)}(2a+3)+\frac{b}{2(a+2)(a+3)}(2a+2) + \frac{b}{(a+1)(a+2)}(2a+1) \leq n \enspace,
$$
where this inequality follows since $a \leq \ln n$ and $\varepsilon < 1/3$.

Hence, for as long as $|C_i| < k$, there are vertices of degree $0$ in $M[V(K_n \setminus
C_i)]$ and thus Breaker can follow the proposed strategy without ever having to forfeit the
game.

Finally, since $k > a$ and $b-a-1 \geq 0$, it follows by Lemmas~\ref{lem1} and~\ref{lem2}
that, in order to prove that BoxMaker has a winning strategy for $B(k,k(n-k),b,a)$, it
suffices to prove that
\begin{equation}
k(n-k) \leq kb - 1 + \frac{k(b-a-1)}{a} \sum_{i=2}^{\left\lceil k/a \right\rceil - 1}
\frac{1}{i} + b.
\end{equation}

The latter inequality can be easily verified given our choice of
$k$, the assumed bounds on $a$ and $b$, and by applying
~\eqref{harmonic} on $\sum\limits_{i=2}^{\left\lceil k/a
\right\rceil - 1} \frac{1}{i}$. Here we also use the fact that for
$n\geq 2$ , $ \ln n +1/2 > \ln(n+1)$. {\hfill $\Box$
\medskip\\}

\textbf{Proof of Theorem~\ref{th:largea}} Breaker aims to win Connectivity game on $K_n$ by
isolating a vertex in Maker's graph. While playing this game, Breaker plays (in his mind) an
auxiliary Box Game $B(n,n(n-1),b,2a)$ (assuming the role of BoxMaker). Let $V(K_n) = \{v_1,
\ldots, v_n\}$ and let $e_1, e_2, \ldots, e_n$ be an arbitrary ordering of the winning sets of
$B(n,n(n-1),b,2a)$. In every move, Breaker claims $b$ free edges of $K_n$ according to his
strategy for $B(n,n(n-1),b,2a)$. That is, whenever he is supposed to claim an element of
$e_i$, he claims an arbitrary free edge $(v_i,v_j)$; if no such free edge exists, then he
claims an arbitrary free edge. Whenever, Maker claims an edge $(v_i,v_j)$ for some $1 \leq i <
j \leq n$, Breaker (in his mind) gives BoxBreaker an arbitrary free element of $e_i$ and an
arbitrary free element of $e_j$. Note that every edge of $K_n$ which Maker claims translates
to two board elements of $B(n,n(n-1),b,2a)$. This is why BoxBreaker's bias is set to be $2a$.
It is thus evident that if BoxMaker has a winning strategy for $B(n,n(n-1),b,2a)$, then
Breaker has a winning strategy for the $(a:b)$ Connectivity game on $K_n$.

By Lemma~\ref{lem2}, in order to prove that BoxMaker has a winning strategy for $B(n,n(n-1),b,2a)$, it suffices to prove that $n(n-1) \leq f(n;b,2a) + b$.

Since $b-2a-1 \geq 0$ and $n > 2a$, it follows by Lemma~\ref{lem1},~\eqref{harmonic} and the
fact that for $n\geq 2$, $ \ln n +1/2 > \ln(n+1)$ that
$$
f(n;b,2a) \geq nb - 1 + \frac {n(b-2a-1)}{2a}\left(\ln\left\lceil \frac{n}{2a} \right\rceil -
1\right) \enspace.
$$

Hence, it suffices to prove that
$$
n(n-1) \leq \frac{n(b-2a-1)}{2a} \left(\ln\left\lceil \frac{n}{2a} \right\rceil - 1\right) + n
b - 1 + b \enspace.
$$
It is straightforward to verify that the above inequality holds for
$$b \geq \frac{2a(n-2+ \ln\left\lceil \frac{n}{2a}\right\rceil) +
\ln\left\lceil \frac{n}{2a}\right\rceil - 1 + \frac{2a}{n}}{2a + \ln\left\lceil \frac{n}{2a}
\right\rceil - 1 + \frac{2a}{n}}.$$
 {\hfill $\Box$ \medskip\\}

\textbf{Proof of Theorem~\ref{th:veryLargea}}
In his first move, Breaker claims the edges of some graph of positive minimum degree.
This is easily done as follows. If $n$ is even, then Breaker claims the edges of some
perfect matching of $K_n$ and then he claims additional $b - n/2$ arbitrary free edges. If $n$ is odd, then Breaker claims the edges of a matching of $K_n$ which covers all vertices of
$K_n$ but one, say $u$. He then claims a free edge $(u,x)$ for some $x \in V(K_n)$ and additional $b - (n-1)/2 - 1$ arbitrary free edges.

In his first move, Maker cannot touch all vertices of $K_n$ since $2a < n$.
Let $w \in V(K_n)$ be an isolated vertex in Maker's graph after his first move.
Since $d_B(w) \geq 1$ and $b \geq n-2$, Breaker can claim all free edges which
are incident with $w$ in his second move and thus win.
{\hfill $\Box$ \medskip\\}

\section{Maker's strategy for the $(a:b)$ Connectivity Game} \label{sec::MakerStrategy}

\textbf{Proof of Theorem~\ref{t::MakersWin}} Our proof relies on the approach of Gebauer and
Szab\'o~\cite{GeSa}, who proved the special case $a=1$. First, let us introduce some
terminology. For a vertex $v \in V(K_n)$ let $C(v)$ denote the connected component in Maker's
graph which contains the vertex $v$. A connected component in Maker's graph is said to be
\emph{dangerous} if it contains at most $2b/a$ vertices. We define a \emph{danger function} on
$V(K_n)$ in the same way it was defined in~\cite{GeSa},

\begin{eqnarray*}
{\mathcal D}(v) = \left\{
\begin{array}{rl}
d_B(v), & \mbox{if } C(v)\mbox{ is dangerous}\\
-1, & \mbox{otherwise}
\end{array}
\right..
\end{eqnarray*}

We are now ready to describe Maker's strategy.

\textbf{Maker's strategy:}
Throughout the game, Maker maintains a set $A \subseteq V(K_n)$ of
\textit{active} vertices. Initially, $A = V(K_n)$.

For as long as Maker's graph is not a spanning tree, Maker plays as follows.
For every $i \geq 1$, Maker's $i$th move consists of $a$ steps. For every $1 \leq j
\leq a$, in the $j$th step of his $i$th move Maker chooses an active vertex
$v_i^{(j)}$ whose danger is maximal among all active vertices (breaking ties arbitrarily).
He then claims a free edge $(x,y)$ for arbitrary vertices $x \in C(v_i^{(j)})$
and $y \in V(K_n) \setminus C(v_i^{(j)})$. Subsequently, Maker deactivates $v_i^{(j)}$, that is,
he removes $v_i^{(j)}$ from $A$. If at any point during the game Maker is unable
to follow the proposed strategy, then he forfeits the game.

Note that by Maker's strategy his graph is a forest at any point during the game. Hence, there
are at most $n-1$ steps in the entire game. It follows that the game lasts at most
$\left\lceil \frac{n-1}{a} \right\rceil$ rounds.

In order to prove Theorem~\ref{t::MakersWin}, it clearly suffices to prove that Maker is able
to follow the proposed strategy without ever having to forfeit the game. First, we prove the
following lemma.

\begin{lemma} \label{lem::active}
At any point during the game there is exactly one active vertex in every connected component of Maker's graph.
\end{lemma}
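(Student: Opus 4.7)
The plan is to prove the lemma by induction on the number of steps Maker has taken, equivalently on the number of edges in Maker's graph. Since Breaker's moves affect neither the set $A$ nor the connected components of Maker's graph, it suffices to verify the invariant immediately after each of Maker's steps; the status between a Maker step and the subsequent Breaker move, and between a Breaker move and the next Maker step, is identical.

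For the base case, before the first step of the game, $A = V(K_n)$ and Maker's graph is edgeless, so each component is a single vertex which is active; the invariant holds trivially. For the inductive step, consider Maker's $j$th step of his $i$th move and assume the invariant holds just before this step. Let $v := v_i^{(j)}$ be the active vertex chosen by Maker. By the induction hypothesis, $v$ is the unique active vertex of $C(v)$, and the component $C(y)$ containing the other endpoint of the edge $(x,y)$ that Maker is about to claim has its own unique active vertex $w$. After Maker claims $(x,y)$, the components $C(v)$ and $C(y)$ merge into a single new component $C'$ that momentarily contains exactly two active vertices, $v$ and $w$; all other components are unaffected and retain their unique active vertex. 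Maker's subsequent deactivation of $v$ leaves $C'$ with exactly one active vertex, namely $w$, which restores the invariant.

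The key point enabling the inductive step is that Maker's strategy explicitly requires $y \in V(K_n) \setminus C(v_i^{(j)})$, so the two components being merged are genuinely distinct and the two active vertices $v$ and $w$ are distinct. There is no real obstacle here; the lemma is essentially a bookkeeping observation recording that Maker's strategy was designed to maintain this property. It will be used in the subsequent analysis to guarantee that the maximum-danger vertex selected in each step is well-defined and lies in a component different from those touched earlier in the same round, which is what allows the danger function to be controlled throughout the game.
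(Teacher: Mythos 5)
Your proof is correct and takes essentially the same approach as the paper: induction on the number of Maker's steps, noting that each step merges two distinct components (each holding one active vertex by the inductive hypothesis) and deactivates exactly one of the two active vertices, leaving the merged component with a unique active vertex. The only quibble is with your closing remark on how the lemma is used: it does not guarantee that the vertex chosen in a later step of the same round lies in a component disjoint from those already touched that round (the surviving active vertex of a freshly merged component can be selected again); the lemma's actual role is simply to ensure the bijection between active vertices and components so that Maker's choice is always available and the danger bookkeeping is well founded.
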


\begin{proof}
Our proof is by induction on the number of steps \textit{r} which Maker
makes throughout the game.

Before the game starts, every vertex of $K_n$ is a connected component
of Maker's graph, and every vertex is active by definition.
Hence, the assertion of the lemma holds for $r=0$.

Let $r \geq 1$ and assume that the assertion of the lemma holds for
every $r' < r$. In the $r$th step, Maker chooses an active vertex $v$ and then
claims an edge $(x,y)$ such that $x \in C(v)$ and $y \notin C(v)$ hold prior
to this move. By the induction hypothesis there is exactly one active vertex
$z \in C(y)$ and $v$ is the sole active vertex in $C(v)$. Since Maker deactivates
$v$ after claiming $(x,y)$, it follows that $z$ is the unique active vertex in
$C(x) \cup C(y)$ after Maker's $r$th step. Clearly, every other component still has
exactly one active vertex.
\end{proof}

We are now ready to prove that Maker can follow his strategy (without forfeiting
the game) for $n-1$ steps. Assume for the sake of contradiction that at some
point during the game Maker chooses an active vertex $v \in C$ and then tries
to connect $C$ with some component of $M \setminus C$, but fails.
It follows that Breaker has already claimed
all the edges of $K_n$ with one endpoint in $C$ and the other in $V \setminus C$. Assume that
Breaker has claimed the last edge of this cut in his $s$th move. As noted above,
$s \leq \left\lceil \frac{n-1}{a} \right\rceil$ must hold. It follows that
$|C| \leq 2b/a$ as otherwise Breaker would have had to claim at least
$\frac{2b}{a} (n-\frac{2b}{a}) > s b$ edges in $s$ moves.
It follows that at any point during the first $s$ rounds of the game there is
always at least one dangerous connected component.

In his $s$th move, Breaker claims at most $b$ edges. Hence,
just before Breaker's $s$th move, $e_B(V(C), V(K_n) \setminus V(C))
\geq |C| \left(n - |C| \right) - b$ must hold.
In particular, $d_B(v) \geq n - \frac{2b}{a} - b$, where $v$ is the unique
active vertex of $C$. Since Maker did not connect $C$ with $M \setminus C$
in his $(s-1)$st move, it follows that, just before this move, there must have
been at least $a+1$ active vertices $v, v_1, \ldots, v_a$ such that the components
$C, C(v_1), \ldots, C(v_a)$ were dangerous and $d_B(u) \geq n - \frac{2b}{a} - b$
for every $u \in \{v, v_1, \ldots, v_a\}$.

For every $1 \leq i \leq s$, let $M_i$ and $B_i$ denote the $i$th
move of Maker and of Breaker, respectively. By Maker's strategy
$v_1^{(1)}, \ldots, v_1^{(a)}, v_2^{(1)}, \ldots, v_2^{(a)}, \ldots,
v_{s-1}^{(1)}, \ldots, v_{s-1}^{(a)}$ are of maximum degree in
Breaker's graph among all active vertices at the appropriate time,
that is, just before Maker's $j$th step of his $i$th move,
$d_B(v_i^{(j)})$ is maximal among all active vertices. Let $v_s$ be
an active vertex of maximum degree in Breaker's graph just before
Maker's $s$th move. Note that, for every $u \in \{v_1^{(1)}, \ldots,
v_1^{(a)}, v_2^{(1)}, \ldots, v_2^{(a)}, \ldots, v_{s-1}^{(1)},
\ldots, v_{s-1}^{(a)}, v_s\}$, if $u$ is active, then $C(u)$ is a
dangerous component. For every $1 \leq i \leq s-1$, let $A_{s-i} =
\{v_{s-i}^{(1)}, \ldots, v_{s-i}^{(a)}, \ldots,v_{s-1}^{(1)},
\ldots, v_{s-1}^{(a)},v_s\}$ denote the subset of vertices of
$\{v_1^{(1)}, \ldots, v_1^{(a)}, v_2^{(1)}, \ldots,  v_2^{(a)},
\ldots, v_{s-1}^{(1)}, \ldots, v_{s-1}^{(a)}, v_s\}$ that are still
active just before Maker's $(s-i)$th move and let $A_s = \{v_s\}$.
For every $A \subseteq V$, let $\overline {{\mathcal D}}_{B_i}(A) =
\frac{\sum_{v \in A} {\mathcal D}(v)}{|A|}$ denote the average
danger value of the vertices of $A$, immediately before Breaker's
move $B_i$. The average danger $\overline{{\mathcal D}}_{M_i}(A)$ is
defined analogously.

Since Maker always deactivates vertices of maximum danger, thus
reducing the average danger value of active vertices, we have the following
lemma.
\begin{lemma} \label{s2}
$\overline{{\mathcal D}}_{M_{s-i}}(A_{s-i}) \geq \overline{{\mathcal
D}}_{B_{s-i+1}}(A_{s-i+1})$ holds for every $1 \leq i \leq s-1$.
\end{lemma}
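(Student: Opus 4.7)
The plan is to decompose Maker's $(s-i)$th move into its $a$ consecutive single-edge steps and track how the average of $\mathcal{D}$ over the relevant subset of $A_{s-i}$ evolves across them. Let $\tau_0$ denote the state of the game immediately before Maker's $(s-i)$th move, so that $\tau_0$ is the moment referenced by the subscript $M_{s-i}$, and for $1 \leq j \leq a$ let $\tau_j$ denote the state immediately after Maker has performed the $j$th step of that move; in particular $\tau_a$ is the moment referenced by $B_{s-i+1}$. Writing $\mathcal{D}_{\tau}(v)$ for the danger of $v$ in the state $\tau$, set $S_j := A_{s-i} \setminus \{v_{s-i}^{(1)}, \ldots, v_{s-i}^{(j)}\}$; then $S_0 = A_{s-i}$ and $S_a = A_{s-i+1}$.

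The argument will rest on two elementary facts. First, during Maker's $(s-i)$th move Breaker does not play, so every $d_B(u)$ is frozen, while each edge Maker claims merely merges connected components of $M$. A dangerous component can therefore become non-dangerous, but never the reverse; consequently $\mathcal{D}_{\tau_j}(u) \leq \mathcal{D}_{\tau_{j-1}}(u)$ for every vertex $u$ and every $1 \leq j \leq a$, since the only possible change is for $\mathcal{D}(u)$ to drop from some nonnegative value to $-1$. Second, by Maker's strategy the vertex $v_{s-i}^{(j)}$ attains the maximum of $\mathcal{D}_{\tau_{j-1}}$ over all currently active vertices; and since every element of $S_{j-1}$ is still awaiting deactivation in some later step, it remains active at $\tau_{j-1}$, so that maximum is in particular attained on $S_{j-1}$.

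These two facts combine to give the inductive step: removing from $S_{j-1}$ a vertex with the maximum $\mathcal{D}_{\tau_{j-1}}$-value cannot increase the average, and advancing time from $\tau_{j-1}$ to $\tau_j$ can then only further weakly decrease each remaining term. Hence the average of $\mathcal{D}_{\tau_j}$ over $S_j$ is at most the average of $\mathcal{D}_{\tau_{j-1}}$ over $S_{j-1}$, and telescoping for $j = 1, \ldots, a$ yields the desired inequality $\overline{\mathcal{D}}_{B_{s-i+1}}(A_{s-i+1}) \leq \overline{\mathcal{D}}_{M_{s-i}}(A_{s-i})$. The only point that requires a moment's care is the pointwise monotonicity of $\mathcal{D}$ during a single Maker move, but this is immediate from the definition of $\mathcal{D}$ together with the observation that Maker's edges only enlarge (never split) connected components; I do not anticipate any further obstacle.
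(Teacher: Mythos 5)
Your proof is correct, and it relies on the same two facts as the paper's: during a Maker move $\mathcal{D}$ is pointwise non-increasing (Breaker's degrees are frozen, and Maker's edges only merge components), and each $v_{s-i}^{(j)}$ has maximal danger among active vertices at the moment it is deactivated. The difference lies in how the two proofs handle the fact that Maker's $(s-i)$th move consists of $a$ sub-steps at distinct times $\tau_0,\dots,\tau_a$. The paper compares both averages at the single time $M_{s-i}$, i.e.\ at $\tau_0$: it first observes that for $u \in A_{s-i+1}$ the component $C(u)$ is dangerous immediately before $B_{s-i+1}$, hence (since components only grow) was dangerous throughout $M_{s-i}$, so that $\mathcal{D}(u)$ is in fact \emph{constant} during $M_{s-i}$, which gives $\overline{{\mathcal D}}_{M_{s-i}}(A_{s-i+1}) = \overline{{\mathcal D}}_{B_{s-i+1}}(A_{s-i+1})$; it then invokes a $\min \geq \max$ comparison of $\mathcal{D}$-values at time $\tau_0$, which implicitly transports each $v_{s-i}^{(j)}$'s maximality back from $\tau_{j-1}$ to $\tau_0$, using monotonicity on one side and the constancy on the other. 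Your telescoping decomposition avoids any such cross-time transport: you compare $\tau_{j-1}$ to $\tau_j$ directly at each sub-step, and there the weak pointwise monotonicity $\mathcal{D}_{\tau_j} \leq \mathcal{D}_{\tau_{j-1}}$ is all that is needed, with the max-danger property invoked precisely at the moment the strategy guarantees it. This makes your argument slightly more self-contained --- in particular it never needs the observation, recorded just before the lemma in the paper, that every still-active $v_i^{(j)}$ lies in a dangerous component --- while proving the identical statement by essentially the same mechanism.
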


\begin{proof}
Let $u \in A_{s-i+1} = \{v_{s-i+1}^{(1)}, \dots, v_{s-i+1}^{(a)}, \dots, v_{s-1}^{(1)}, \dots, v_{s-1}^{(a)}, v_s\}$ be an arbitrary vertex. Since $C(u)$ is a dangerous component immediately before $B_{s-i+1}$, the danger ${\mathcal D}(u)$ does not change during $M_{s-i}$.
It follows that $\overline{{\mathcal D}}_{M_{s-i}}(A_{s-i+1})= \overline{{\mathcal
D}}_{B_{s-i+1}}(A_{s-i+1})$.

Note that the vertices contained in
$A_{s-i+1}$ were still active before $M_{s-i}$.

Following his strategy, Maker deactivated all the vertices of $\{v_{s-i}^{(1)},
\ldots, v_{s-i}^{(a)}\}$, because their danger values were the largest among all
the active vertices of $\{v_{s-i}^{(1)}, \ldots, v_{s-i}^{(a)}, \ldots,
v_{s-1}^{(1)}, \ldots, v_{s-1}^{(a)}, v_s\}$. It follows that
$$
\min \{{\mathcal D}(v_{s-i}^{(1)}), \ldots, {\mathcal D}(v_{s-i}^{(a)})\}
\geq \max\{{\mathcal D}(v_{s-i+1}^{(1)}), \ldots, {\mathcal D}(v_{s-i+1}^{(a)}),
\ldots, {\mathcal D}(v_{s-1}^{(1)}), \ldots, {\mathcal D}(v_{s-1}^{(a)}), {\mathcal D}(v_s)\},
$$
and thus $\overline{{\mathcal D}}_{M_{s-i}}(A_{s-i}) \geq \overline{{\mathcal D}}_{M_{s-i}}(A_{s-i+1})$, as claimed.
\end{proof}

\medskip

The following lemma gives two estimates on the change of the danger
value caused by Breaker's moves.
\begin{lemma} \label{s3}
The following two inequalities hold for every $1 \leq i \leq s-1$.
\begin{itemize}
\item [$(i)$] $\overline{{\mathcal D}}_{M_{s-i}}(A_{s-i})-
\overline{{\mathcal D}}_{B_{s-i}}(A_{s-i}) \leq \frac{2 b}{ai+1} < \frac{2 b}{ai}$.
\item [$(ii)$] Define a function $g : \{1, \ldots, s\} \rightarrow \mathbb{N}$ by setting $g(i)$ to be the number of edges with both endpoints in $A_i$ which Breaker has claimed
during the first $i-1$ moves of the game. Then,
$$
\overline{{\mathcal D}}_{M_{s-i}}(A_{s-i})- \overline{{\mathcal D}}_{B_{s-i}}(A_{s-i}) \leq
\frac{b + a^2(i-1) + a + {a \choose 2} + g(s-i+1) - g(s-i)}{ai+1}.
$$
\end{itemize}
\end{lemma}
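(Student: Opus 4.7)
The plan begins with an observation: for every $v \in A_{s-i}$, the equality $\mathcal{D}(v) = d_B(v)$ holds both immediately before and immediately after Breaker's move $B_{s-i}$. Indeed, $v$ is active at both instants (Breaker never deactivates vertices), and because Breaker's moves do not alter Maker's graph, the component $C(v)$ is unchanged during $B_{s-i}$; by the observation established just before Lemma~\ref{s2}, this component is dangerous whenever $v$ is active. Consequently, the left-hand side of both (i) and (ii) equals the change in $\frac{1}{|A_{s-i}|}\sum_{v \in A_{s-i}} d_B(v)$ during $B_{s-i}$, and a direct count gives $|A_{s-i}| = ai + 1$ (the vertex $v_s$, together with the $a$ vertices selected by Maker in each of $M_{s-i}, M_{s-i+1}, \ldots, M_{s-1}$).

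Let $e_1$ and $e_2$ denote the number of edges Breaker claims in $B_{s-i}$ with, respectively, exactly one or both endpoints in $A_{s-i}$. The sum $\sum_{v \in A_{s-i}} d_B(v)$ increases by exactly $e_1 + 2e_2$ during $B_{s-i}$. Part (i) follows immediately from the crude bound $e_1 + 2e_2 \leq 2(e_1 + e_2) \leq 2b$ upon dividing by $ai+1$. For (ii) I would use instead $e_1 + 2e_2 = (e_1 + e_2) + e_2 \leq b + e_2$, reducing the task to proving
\[
e_2 \;\leq\; \binom{a}{2} + a^2(i-1) + a + g(s-i+1) - g(s-i).
\]

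The crux of the argument is this last inequality. The idea is to exploit the partition $A_{s-i} = A_{s-i+1} \,\sqcup\, \{v_{s-i}^{(1)}, \ldots, v_{s-i}^{(a)}\}$ and classify every edge with both endpoints in $A_{s-i}$ as either \emph{type I} (both endpoints in $A_{s-i+1}$) or \emph{type II/III} (at least one endpoint in $\{v_{s-i}^{(1)}, \ldots, v_{s-i}^{(a)}\}$). Let $N$ be the number of such edges Breaker has claimed by the end of $B_{s-i}$, with the type split $N = N_I + N_{II,III}$. Since $A_{s-i+1} \subseteq A_{s-i}$, the definition of $g$ gives $N_I = g(s-i+1)$, while $N_{II,III}$ is bounded by the number of pairs touching $\{v_{s-i}^{(1)}, \ldots, v_{s-i}^{(a)}\}$, namely $\binom{a}{2} + a\,|A_{s-i+1}| = \binom{a}{2} + a^2(i-1) + a$. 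Chronologically, by the definition of $g$ we also have $N = g(s-i) + e_2$, so combining the two decompositions yields $e_2 = N - g(s-i) \leq g(s-i+1) - g(s-i) + \binom{a}{2} + a^2(i-1) + a$, and dividing by $ai+1$ gives (ii). The main obstacle is the bookkeeping needed to reconcile the two decompositions of $N$: lumping \emph{all} edges Breaker claimed with both endpoints in $A_{s-i}$ prior to $B_{s-i}$ into the single quantity $g(s-i)$ is exactly what forces the already-claimed type-I edges to cancel against those counted by $g(s-i+1)$, producing the difference $g(s-i+1) - g(s-i)$ rather than the weaker term $g(s-i+1)$.
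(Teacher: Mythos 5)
Your proof is correct and follows essentially the same route as the paper's: the reduction of the average-danger change to the change in $\frac{1}{|A_{s-i}|}\sum_{v\in A_{s-i}} d_B(v)$, the bound $e_1+2e_2 \le b+e_2$ (the paper writes $2p+q=p+b$), and the double count of edges with both endpoints in $A_{s-i}$ claimed through move $B_{s-i}$ via the split $A_{s-i}=A_{s-i+1}\sqcup\{v_{s-i}^{(1)},\dots,v_{s-i}^{(a)}\}$ are exactly the paper's steps, with your $e_2$ playing the role of the paper's $p$. The only cosmetic difference is that you spell out the identity $N=N_I+N_{II,III}=g(s-i)+e_2$ where the paper states the same double count in prose.
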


\begin{proof}
\begin{itemize}
\item [$(i)$] The components
$C(v_{s-i}^{(1)}), \ldots, C(v_{s-i}^{(a)}), \ldots, C(v_{s-1}^{(1)}), \ldots, C(v_{s-1}^{(a)}), C(v_s)$ are dangerous before Maker's $(s-i)$th move.
During Breaker's moves, the components of Maker's graph do not change.
Hence, the change of the danger values of the vertices of $A_{s-i}$, caused by Breaker's $(s-i)$th move, depend solely on the change of their degrees in Breaker's graph. In his $(s-i)$th move, Breaker claims $b$ edges and thus the increase of the sum of the
degrees of the vertices of $\{v_{s-i}^{(1)}, \ldots, v_{s-i}^{(a)}, \ldots, v_{s-1}^{(1)}, \ldots, v_{s-1}^{(a)}, v_s\}$ is at most $2b$. The size of $A_{s-i}$ is $ai+1$. Thus
$\overline{{\mathcal D}}_{B_{s-i}}(A_{s-i})$ increases by at most $\frac {2b}{ai+1}$ during $B_{s-i}$.

\item [$(ii)$] Let $p$ denote the number of edges $(x,y)$ claimed by Breaker during $B_{s-i}$ such that $\{x,y\} \subseteq A_{s-i}$ and let $q = b - p$. Hence, the increase of the sum $\sum_{u \in A_{s-i}} d_B(u)$ during $B_{s-i}$ is at most $2p+q = p+b$.
It follows that $\overline{{\mathcal D}}_{B_{s-i}}(A_{s-i})$ increases by at most $\frac{b + p}{ai+1}$ during $B_{s-i}$. It remains to prove that $p \leq a^2(i-1) + a + {a \choose 2} + g(s-i+1) - g(s-i)$.

During his first $s-i-1$ moves, Breaker has claimed exactly $g(s-i)$ edges with
both their endpoints in $A_{s-i}$. Hence, during his first $s-i$ moves, Breaker has claimed exactly $g(s-i) + p$ edges with both their endpoints in $A_{s-i}$. Exactly $g(s-i+1)$ of these edges have both their endpoints in $A_{s-i+1} = A_{s-i} \setminus \{v_{s-i}^{(1)}, \ldots, v_{s-i}^{(a)}\}$. There can be at most $a \choose 2$ edges connecting two vertices of
$\{v_{s-i}^{(1)}, \ldots, v_{s-i}^{(a)}\}$. Moreover, each vertex of $\{v_{s-i}^{(1)}, \ldots, v_{s-i}^{(a)}\}$ is adjacent to at most $a(i-1)+1$ vertices of $A_{s-i+1}$. Combining all of these observations, we conclude that
$g(s-i) + p \leq g(s-i+1) + a^2 (i-1) + a + {a \choose 2}$, entailing $p \leq {a \choose 2} + a^2(i-1) + a + g(s-i+1) - g(s-i)$ as claimed.
\end{itemize}
\end{proof}

Clearly, before the game starts ${\mathcal D}(u) = d_B(u) = 0$ holds for every vertex $u$.
In particular $\overline{{\mathcal D}}_{B_1}(A_{1}) = 0$. Using our assumption that Breaker wins the game, we will obtain a contradiction by showing that $\overline{{\mathcal D}}_{B_1}(A_{1}) > 0$.

Note that, as previously observed, $\overline{{\mathcal D}}_{B_s}(A_s) \geq n - \frac{2b}{a} - b$. We will use this fact, Lemma~\ref{s2}, Lemma~\ref{s3}, the inequalities $\frac{1}{ai+1} < \frac{1}{ai}$ and $b + a^2(i-1) + a + {a \choose 2} + g(s-i+1) - g(s-i) \geq 0$ (which hold for every $1 \leq i \leq s-1$), and~\eqref{harmonic}, in order to reach the aforementioned contradiction.

Let $k := \left\lfloor \frac{n}{a^2 \ln n}\right\rfloor$.

First, assume that $a = o(\sqrt{n/\ln n})$. We split the game into two parts -- the main game and the last $k$ moves. In these last moves, we will use a more delicate estimate on the effect of Breaker's move on the average danger. We distinguish between two cases.

\noindent\textbf{Case 1:} $s < k$.

\begin{align*}
\overline{{\mathcal D}}_{B_1}(A_{1}) &= \overline{{\mathcal D}}_{B_s}(A_s)
+\sum_{i=1}^{s-1}\left(\overline{{\mathcal D}}_{M_{s-i}}(A_{s-i}) - \overline{{\mathcal
D}}_{B_{s-i+1}}(A_{s-i+1})\right)\\ \displaybreak[0]
&  - \sum_{i=1}^{s-1}
\left(\overline{{\mathcal D}}_{M_{s-i}}(A_{s-i}) -
\overline{{\mathcal D}}_{B_{s-i}}(A_{s-i})\right)\\
&\geq n- \frac{2b}{a} - b + \sum_{i=1}^{s-1}0-\sum_{i=1}^{s-1} \frac{b + a^2(i-1)
+ a + {a \choose 2} + g(s-i+1) - g(s-i)}{ai}\\
&\geq n - \frac{b}{a} (H_{s-1}+2+a) - a(s-1) + a H_{s-1} - H_{s-1} - \frac{a-1}{2} H_{s-1} \\
\displaybreak[0]
&  - \frac{g(s)}{a} + \sum_{i=1}^{s-2}\frac{g(s-i)}{ai(i+1)} + \frac{g(1)}{a (s-1)}\\
&\geq n - \frac{b}{a} (H_{s-1}+2+ a) + \frac{a-1}{2} H_{s-1} - a(s-1) \\
& \qquad \qquad \qquad \qquad \qquad(\mbox{since } g(s)=0 \mbox{ and
} g(s-i) \geq 0)\\ \displaybreak[0] &> n - \frac{b}{a} (H_s + 2 + a)
+ \frac{a-1}{2} H_s - a s\\ \displaybreak[0]
&> n - \frac{b}{a} (\ln k + 3 + a) + \frac{a-1}{2} \ln k - a k \\
&  \qquad \qquad \qquad \qquad \qquad (\mbox{since } s < k)\\
\displaybreak[0] &> n - \frac{n-\frac{n}{a \ln n} + \frac{a-1}{2}
\cdot \ln \left(\frac{n}{a^2\ln
n}\right)}{\ln n + a + \ln\ln n + 4} \left(\ln n - \ln a - \ln\ln n + a + 3 \right) \\
\displaybreak[0]
&  - \frac{n}{a \ln n} + \frac{a-1}{2} \cdot \ln \left(\frac{n}{a^2 \ln n}\right)\\
%&\geq O\left(\frac {n(\ln\ln n+\ln a)}{(3c+1)\ln n}\right)-O\left(\frac n{\ln n}\right),[c>0,\mbox{constant depending on } a]\\
&> 0.
\end{align*}

\textbf{Case 2:} $s \geq k$.
\begin{align*}
\overline{{\mathcal D}}_{B_1}(A_{1})&= \overline{{\mathcal D}}_{B_s}(A_s) +
\sum_{i=1}^{s-1}\left(\overline{{\mathcal D}}_{M_{s-i}}(A_{s-i})-\overline{{\mathcal D}}_{B_{s-i+1}}(A_{s-i+1})\right)\\
\displaybreak[0]
&  - \sum_{i=1}^{k-1}\left(\overline{{\mathcal D}}_{M_{s-i}}(A_{s-i}) - \overline{{\mathcal
D}}_{B_{s-i}}(A_{s-i})\right)-\sum_{i=k}^{s-1}
\left(\overline{{\mathcal D}}_{M_{s-i}}(A_{s-i})-\overline{{\mathcal D}}_{B_{s-i}}(A_{s-i}) \right)\\ \displaybreak[0]
&\geq n - \frac{2b}{a} - b + \sum_{i=1}^{s-1} 0 \\
&  - \sum_{i=1}^{k-1} \frac{b + a^2(i-1) + a + {a \choose 2} + g(s-i+1) - g(s-i)}{ai} - \sum_{i=k}^{s-1} \frac{2 b}{ai}\\ \displaybreak[0]
&\geq n - \frac{2b}{a} - b - \frac{b}{a} H_{k-1} + \frac{a-1}{2} H_{k-1} - a(k-1)\\
&  -\frac{g(s)}{a} + \sum_{i=1}^{k-2} \frac{g(s-i)}{a i(i+1)} + \frac{g(s-k+1)}{a (k-1)} - \frac{2 b}{a}(H_{s-1}-H_{k-1})\\ \displaybreak[0]
&\geq n - \frac{b}{a} (2 H_s - H_{k-1} + 2 + a) + \frac{a-1}{2} H_{k -1} - a (k-1)\\
&  \qquad \qquad \qquad \qquad \qquad(\mbox{since } g(s)=0 \mbox{
and } g(s-i) \geq 0)\\ \displaybreak[0] &> n - \frac{b}{a} (2\ln
\left(\frac{n-1}{a}\right) - \ln k + a + 4) +
\frac{a-1}{2} \cdot \ln k - a k\\
&\geq n - \frac{b}{a} (\ln n + \ln\ln n + a + 4) + \frac{a-1}{2}
\cdot \ln \left(\frac{n}{a^2 \ln n}\right) - \frac n{a \ln n}\\ \displaybreak[0]
&> n - \frac {n-\frac n{a \ln n} + \frac{a-1}{2} \cdot \ln \left(\frac{n}{a^2 \ln n}\right)}
{\ln n + a + \ln \ln n + 4}(\ln n + \ln\ln n + 4 + a)\\ \displaybreak[0]
&  +\frac{a-1}{2} \cdot \ln \left(\frac{n}{a^2 \ln n}\right) - \frac{n}{a \ln n}\\
%&\geq&n-\frac {(\ln n+a)(1+o(1))}{\ln n+a}n(1-o(1))-\frac a 2 \ln n(1-o(1))-\frac n{\ln n}\\
&= 0 \,.
\end{align*}

Next, assume that $a = \Omega(\sqrt{n/\ln n})$ and $a \leq \frac{n-1}{2}$.
In this case, the game does not last long.

\begin{eqnarray*}
\overline{{\mathcal D}}_{B_1}(A_{1}) &=& \overline{{\mathcal D}}_{B_s}(A_s) +
\sum_{i=1}^{s-1}\left(\overline{{\mathcal D}}_{M_{s-i}}(A_{s-i}) - \overline{{\mathcal
D}}_{B_{s-i+1}}(A_{s-i+1})\right) \\
& & - \sum_{i=1}^{s-1}\left(\overline{{\mathcal D}}_{M_{s-i}}(A_{s-i}) - \overline{{\mathcal D}}_{B_{s-i}}(A_{s-i})\right)\\
&>& n - \frac{2 b}{a} - b + \sum_{i=1}^{s-1} 0 - \sum_{i=1}^{s-1}\frac{2 b}{ai} \\
&>& n- \frac{2 b}{a} - b - \frac{2 b}{a} (\ln s + 1) \\
&=& n - \frac{2b}{a} (2 + \frac{a}{2} + \ln s) \\
&\geq& n - \frac{2 b}{a} \left(2 + \frac{a}{2} + \ln \left(\frac{n-1}{a}\right)\right) \\
&>& n- \frac{n}{\ln n - \ln a + 2 + \frac{a}{2}} \left(2 + \frac{a}{2} + \ln
\left(\frac {n-1}{a}\right)\right)\\
&\geq& 0 \,.
\end{eqnarray*}
{\hfill $\Box$ \medskip\\}

\section{Concluding remarks and open problems} \label{sec::openprob}

\paragraph{Determining the threshold bias.} In this paper we have tried to determine the
winner of the $(a : b)$ Connectivity game on $E(K_n)$ for all values of $a$ and $b$. We have
established lower and upper bounds on the threshold bias $b_0(a)$ for every value of $a$. For
most values, these bounds are quite sharp. However, for $a = c \ln n$, where $c > 0$ is fixed,
the first order terms in the upper bound and the lower bound differ. For that reason, we feel
that an improvement of the bounds in this case would be particularly interesting.

\paragraph{Analyzing other games.} There are many well-studied Maker-Breaker games played on the
edge-set of the complete graph for which, in the biased $(a : b)$ version, the identity of the
winner is known for $a=1$ and almost all values of $b$. Some examples are the Hamilton cycle
game (see~\cite{ChEr} and~\cite{Krivelevich}) and the $H$-game, where $H$ is some fixed graph
(see~\cite{BL}). It would be interesting to analyze these games for other values of $a$ (and
corresponding values of $b$) as well.

We note that all the results of the present paper also hold for the Positive Minimum Degree game,
where the Maker's goal is to touch all $n$ vertices of the board $K_n$, and Breaker's goal
is to prevent Maker from doing this. Indeed, if Maker wins the Connectivity game, then he clearly
wins the Positive Minimum Degree game with the same parameters as well. On the other hand, in
all our results that guarantee Breaker's win in the Connectivity game, we in fact prove that Breaker can
isolate a vertex in Maker's graph, which clearly also ensures Breaker's win in the Positive Minimum
Degree game.

\end{document}